\journal{CAMWA} 
\DeclareMathOperator*{\argmin}{arg\,min}
\newcommand{\R}{\mathbb{R}}
\newcommand{\PP}{\mathbb{P}}
\newcommand{\BB}{\mathbb{B}}
\newcommand{\Om}{\Omega}
\newcommand{\bOm}{\partial \Omega}
\newcommand{\Vn}{\textbf{n}}
\newcommand{\Vb}{\textbf{b}}
\newcommand{\Gb}{\beta}
\newcommand{\Gg}{\gamma}
\newcommand{\Gl}{\lambda}
\newcommand{\tnorm}[1]{{\left\vert\kern-0.25ex\left\vert\kern-0.25ex\left\vert #1\right\vert\kern-0.25ex\right\vert\kern-0.25ex\right\vert}}
\newcommand{\norm}[1]{{\left\vert\kern-0.25ex\left\vert #1\right\vert\kern-0.25ex\right\vert}}
\newtheorem{assmptn}{Assumption}
\newtheorem{lmm}{Lemma}
\newtheorem{prpstn}{Proposition}
\newtheorem{dfntn}{Definition}
\newtheorem{thrm}{Theorem}
\newtheorem{rmrk}{Remark}
\begin{document}

\begin{frontmatter}
\title{Adaptive stabilized finite elements via residual minimization onto bubble enrichments}

\author[one]{Jose G. Hasbani}\corref{mycorrespondingauthor}
\cortext[mycorrespondingauthor]{Corresponding author}
\ead{jose.hasbani@vistaenergy.com}

\author[two]{Paulina Sep\'ulveda}
\author[two]{Ignacio Muga}
\author[three]{Victor M. Calo}
\author[two]{Sergio Rojas}

\address[one]{Vista Energy, Argentina}
\address[two]{Instituto de Matem\'aticas, Pontificia Universidad Cat\'olica de Valpara\'iso, Valpara\'iso, Chile}
\address[three]{School of Electrical Engineering,  Computing and Mathematical Sciences,  Curtin University,  Bentley,  Australia}

\begin{abstract}
The Adaptive Stabilized Finite Element method (AS-FEM) developed in~\cite{ calo2020ASFEM} combines the idea of the residual minimization method with the inf-sup stability offered by the discontinuous Galerkin (dG) frameworks.  As a result,  the discretizations deliver stabilized approximations and residual representatives in the dG space that can drive automatic adaptivity. We generalize AS-FEM by considering continuous test spaces; thus, we propose a residual minimization method on a stable Continuous Interior Penalty (CIP) formulation that considers a $C^0$-conforming trial FEM space and a test space based on the enrichment of the trial space by bubble functions. {In our numerical experiments, the test space choice results in a significant reduction of the total degrees of freedom compared to the dG test spaces of~\cite{ calo2020ASFEM}} that converge at the same rate.  Moreover,  as trial and test spaces are $C^0$-conforming,  implementing a full dG data structure is unnecessary, simplifying the method's implementation considerably and making it appealing  for industrial applications, see~\cite{ Labanda2022}.

\end{abstract}

\begin{keyword}
adaptivity\sep stabilized finite elements \sep residual minimization\sep Continuous Galerkin \sep Continuous Interior Penalty
\end{keyword}

\end{frontmatter}

\tableofcontents
\section{Introduction}
The continuous Galerkin (cG) finite element methods (FEM) for advection-dominated reaction-type problems might suffer from instabilities. When the hyperbolic character of the problem becomes predominant, interior and outflow layers form, causing large local gradients in the solution. The cG-FEM is unstable in this regime, and various stabilized methods have been proposed. Techniques in the framework of cG-FEM are available in the literature, including Streamline-Upwind Petrov-Galerkin (SUPG) method~\cite{ johnson1984SUPG}, residual-free bubbles~\cite{ brezzi1999RFB}, and subviscosity models for advection-diffusion problems~\cite{ guermond1999artviz}. The relationships between the aforementioned strategies are also well understood in almost all cases. For example, in~\cite{ brezzi1992bstablerel}, a relation between stabilized finite element methods and the Galerkin method employing bubble functions was established for the advection-diffusion problems. In that work, the authors showed that bubble functions help stabilize the advective operator without using upwinding or any other numerical strategy. In particular, for the advection-diffusion problem, the Galerkin method employing piecewise linear functions enriched with bubble functions was shown to be equivalent to the SUPG method in the diffusive limit. Applications of this type of enrichment include stabilization of Stokes flow~\cite{ arnold1996mini} and stabilization of Galerkin approximation using artificial viscosity~\cite{ guermond1999artviz}. Hughes generalized the stabilized methods construction into a unified framework in the variational multiscale framework~\cite{ Hughes1995, Hughes1998}; these ideas were extended to other applications such as turbulence modeling~\cite{ Hughes:2017, Bazilevs2007}. Although those strategies are now reaching maturity, those methods have some drawbacks in certain complex flow regimes. For instance, the SUPG stabilization becomes non-symmetric and does not allow lumped mass, the residual free bubbles method adds additional degrees of freedom to the system, and the projection methods introduce hierarchical meshes for the projection on the subgrid viscosity model~\cite{ guermond1999artviz}.


Interior Penalty penalty methods using continuous functions were introduced originally by~\cite{ babushka1973InteriorPenalty, douglas1976InteriorPenalty} for different problems, namely, the biharmonic operator and the second-order elliptic and parabolic problems. These methods penalize the flux jump of the discrete solution at mesh interfaces. Thus, the method~\cite{ douglas1976InteriorPenalty} keeps the benefits of continuous finite element methods as they were standard for elliptic problems while simultaneously managing the difficulties encountered by these methods when the hyperbolic character of diffusion-advection problems becomes dominant in the advection limit. However, the robustness of the error estimate in the advection-dominated regime was not analyzed until~\cite{ burman2004edgebased}, in which the analysis was based on linear finite elements. Recently, a generalized hp-convergence analysis for a high-order Continuous Interior Penalty (CIP) finite element method was presented in~\cite{ burmanErn2007hpCIP} applied to advection-reaction and diffusion-advection-reaction problems.

Alternatively, the Continuous Interior Penalty (CIP) method, introduced in~\cite{ douglas1976intpenalty}, 
for advection-diffusion-reaction problems; this method adds an $L^2$ penalization to the flux jumps (i.e., gradient jumps for uniform coefficients) over the mesh interior edges/facets to stabilize a continuous approximation (see~\cite{ burman2004edgebased, burmanErn2005hpCIP, burman2006edgebased, burmanErn2007hpCIP}). In~\cite{ burmanErn2005hpCIP} and~\cite{ burmanErn2007hpCIP}, the authors developed a hp-convergence analysis for high-order CIP methods for advection-reaction and  advection-diffusion problems. This stabilization does not depend on the diffusion coefficient and considers the case of pure advection.  Moreover,~\citet{ burman2009errorCIP} introduced a formulation relating the stabilized continuous and discontinuous Galerkin frameworks with the CIP formulation. In that work, the author showed that both frameworks can be condensed into a single formulation, presented a robust a-posteriori error estimate for advection-reaction problems to guide adaptivity, and compared the low-order CIP and dG approximations using an adaptive approach showing that the CIP method achieves optimal convergence in the $L^2$ norm.


More recently, the Adaptive Stabilized Finite Element Method (AS-FEM) was introduced (see~\cite{ calo2020ASFEM}). This method formulates residual minimization problems within a dG mathematical framework. Namely, a discrete approximation of the solution in a continuous trial space is constructed by minimizing the residual in the dual norm of a dG test space with inf-sup stability. The residual minimization problem is equivalent to a saddle-point problem that inherits dG's inf-sup stability. There are some similarities with the Discontinuous Petrov-Galerkin (DPG) method as both minimize the residual in a non-standard norm (see, e.g.,~\cite{ ChaHeuBuiDemCAMWA2014, Calo2014, Dem2010, DemGopBOOK-CH2014, Dem2012, Dem2013, Niemi2011, Niemi2013, Niemi2013b, zitelli2011}). However, AS-FEM builds on non-conforming dG formulations, where stronger norms than those used in DPG may be chosen when the test space contains continuous functions. Application examples include diffusive-advective-reactive problems~\cite{ cier2020automatic}, incompressible Stokes flows~\cite{ kyburg2020, los2021dgirm}, continuation analysis of compaction banding in geomaterials~\cite{ cier2020adaptive} and weak constraint enforcement for advection-dominated diffusion problems~\cite{ Cier2020nonlinear}. 
The method has been successfully applied to several nonlinear problems, such as dynamic fracture propagation~\cite{ Labanda2022}, mineral deposition~\cite{ Poulet:2023}, and the method of lines for Bratu's equation~\cite{ Giraldo2023}.
In~\cite{ rojas2021GoA}, the authors extend AS-FEM to goal-oriented adaptivity (GoA); they describe a general theory for problems with well-posed formulations and provide error estimates to guide the GoA for advection-diffusion-reaction problems. In addition, they define a discrete adjoint system as a saddle-point problem where the discontinuous conforming space across element interfaces restricts the solution. The same dG inf-sup arguments guarantee the well-posedness of this adjoint saddle-point problem. Solving the primal and the adjoint problem requires the solution of a single saddle-point problem with two right-hand sides. Moreover,  the authors proposed two alternative stable discrete problems that can measure the discrete adjoint error of the problem; a strategy similar to a recent DPG theory~\cite{ keith2019goal} in which the adjoint problem is solved using the original saddle-point formulation with a different right-hand side.

The method proposed in~\cite{ calo2020ASFEM} and its extension to GoA in~\cite{ rojas2021GoA} contain a general theory motivated in a dG framework; this framework can also use continuous test spaces (see~\cite{ Labanda2022} for a demonstration of this idea). Herein, we analyze the extension of the AS-FEM based on a stable CIP formulation for an advection-reaction problem proposed in~\cite{ burman2004edgebased, burmanErn2007hpCIP}. We consider a bubble-enriched continuous trial space as a test space to explode the power of the residual minimization method, which delivers an error representative  which is robust and reliable to guide adaptive mesh refinements. This space is sufficient to guarantee a distance to the trial space and obtain a residual representative to drive adaptivity. 

We test the method's performance in a challenging advection-reaction problem. We choose this model problem since the elliptic character of the diffusion term in the advection-diffusion equation has some smoothing properties on the transport problem. Nonetheless, extending the results obtained in this work to advection-diffusion-reaction problems is straightforward. Additionally, we present a new result on a priori error estimates for the residual minimization method using continuous test spaces, which relies on an orthogonality argument for boundedness of the discrete bilinear form and coercivity that proves quasi-optimal convergence for advection-dominated problems. Finally, we perform adaptive numerical experiments using energy-based and GoA strategies~\cite{ calo2020ASFEM, rojas2021GoA}. In the energy-based examples,  we evaluate the performance of the residual error estimate obtained by the residual minimization method against the a-posteriori error estimate of~\cite{ burman2009errorCIP} regarding the relative error in the $L^2$-norm and the resulting adapted meshes.

The remainder of this paper is structured as follows: Section~\ref{sec:2} introduces the advection-reaction model problem. In Section~\ref{sec:3}, the notation and the discrete problem settings are presented.  In Section~\ref{sec:Residual}, we describe the CIP formulation in the context of bubble-enriched continuous spaces, its main properties, and the residual minimization method, and we state the main result of this work. In  Section~\ref{sec:num}, we present numerical experiments to show the method's performance and compare the results with other methods found in the literature. Finally, Section~\ref{sec:conclusions} summarizes the main contributions of this work and points to possible future research directions. 

\section{Model problem.}\label{sec:2}

Let $\Om \subset \R^{d}$ ( $d=2,3$) be an open bounded connected set, with Lipschitz boundary $\bOm$, and outward normal vector $\Vn$. Consider an advection field $\Vb\in [L^\infty(\Omega)]^d$  such that $\nabla \cdot \Vb\in L^\infty(\Omega)$, 
and a reaction coefficient $\mu \in L^\infty(\Omega)$. Let us define the following graph space:
\begin{align}
    W :=\left\{w \in L^2(\Omega):\, \Vb\cdot \nabla w \in L^2(\Omega)\right\},
\end{align}
equipped with the graph norm $\|w\|_W^2 = \|w\|_{L^2(\Omega)}^2 + \|\Vb\cdot \nabla w\|_{L^2(\Omega)}^2$. The advection field $\Vb$ partitions the boundary $\partial \Omega$ into 
inflow,  characteristic, and outflow parts,
having the following expressions when $\Vb$ is continuous\footnote{These boundaries may also be defined for the  general case $\Vb \in [L^\infty(\Omega)]^d$ and $\nabla \cdot \Vb \in L^\infty(\Omega)$; see  \cite[Section 2]{ broersen2018stability}.}: 
\begin{subequations}
\begin{align}
\partial\Omega_{-}& :=\{x \in \partial\Omega : \Vb(x)\cdot \Vn(x)<0\},\\
\partial\Omega_{0}& :=\{x \in \partial\Omega : \Vb(x)\cdot \Vn(x)=0\},\\
\partial\Omega_{+}& :=\{x \in \partial\Omega : \Vb(x)\cdot \Vn(x)>0\}.
\end{align}
\end{subequations}



Given $f \in L^2(\Omega) $, we consider the advection-reaction model problem that seeks $u\in W$ such that:
\begin{align}\label{eq:model_problem}
\left\{
    \begin{array}{l}
        \begin{array}{rl}
        \Vb \cdot \nabla u + \mu \, u = f, & \text{ in } \Om \smallskip\\
	u=0, & \text{ on } \bOm_-.
        \end{array}
    \end{array}\right.
\end{align}
%
%
We point out that traces of $W$ are well-defined\footnote{See, e.g.,~\cite[Section 2.1.3]{ Ern2012dG} and~\cite{ gopalakrishnan2015} for an  extension.}, implying that the linear space
 $W_{0,-}:=\{w\in W: w|_{\partial\Omega_-}=0\}$ is also well-defined. Moreover, equation~\eqref{eq:model_problem} translates into finding $u\in W_{0,-}$ such that $A u = f$, where $A: W_{0,-} \to L^2(\Om)$  (defined  as $A\,w := \Vb\cdot \nabla w +\mu w $, for all $w\in W_{0,-}$)  is a linear isomorphism  provided 
 \begin{equation}
    \mu - \dfrac{1}{2} \nabla \cdot \Vb \geq \mu_0 > 0, \quad \text{ \textit{a.e.} in } \Omega,
\label{cond:advection}
\end{equation}
holds for some positive constant $\mu_0$ (see~\cite[Proposition 5.9]{ ErnGermond2004} ).  When $\mu =0$ and $ \nabla\cdot \Vb=0$, then $A$ still is a linear isomorphism provided that $\Vb$ is an $\Omega$-filling field (see~\cite[Remark 5.10]{ ErnGermond2004}).
\begin{rmrk}[Non-homogeneous inflow boundary condition]
The trace operator is linear, continuous, and surjective from $W$ onto the space
$$
L^2(|\Vb\cdot \Vn|;\partial \Omega):=\left\{ w \text{ measurable in } \partial \Omega:  \int_{\partial \Omega}|\Vb\cdot \Vn| w^2<+\infty \right\},
$$
which allows us to consider non-homogeneous inflow boundary conditions in~\eqref{eq:model_problem}, whenever the inflow data belongs to
$L^2(|\Vb\cdot \Vn|;\partial \Omega)$ 
(see, e.g.,~\cite[Lemma 2.11]{ Ern2012dG}). 

\end{rmrk}
%
\section{Notation, discrete spaces and interpolation results.}\label{sec:3}

Let $\Omega_h$ be a quasi-uniform simplicial mesh of $\Om$, defined as a collection of finite many open connected elements $T\subset \Om$ with Lipschitz boundaries, such that  $\overline{\Omega}$ is the union of the closures of all mesh elements ${T}$ in $\Omega$. We denote by $\mathcal{F}^{0}_{h}$ the collection of all interior element boundaries (edges/faces); 
%
 by $\mathcal{F}^{\partial}_{h}$ the  collection of those  element boundaries that belong to $\bOm$; and set $\mathcal{F}_h:=\mathcal{F}^0_h \bigcup \mathcal{F}^{\partial}_h$. Given $T\in \Omega_h$, let us denote the diameter of $T$ by $h_T>0$, and let $h:=\underset{T\in\Om_h} {\max} \, h_T$.
 Analogously, for $e\subset \mathcal{F}^{0}_{h}$, let $h_e>0$ be the diameter $e$. 
%

For a given set $D\subset\mathbb R^d$, consider the Sobolev space $H^s(D)$ of order $s\geq 0$, and
 denote by $\|\cdot\|_{s,D}$ its well-known norm (see, e.g.,~\cite{ adams2003sobolev}). By convention, set $L^2(D)=H^0(D)$ and abbreviate the $L^2(D)$ inner product by $(\cdot,\cdot)_D$ and its respective norm by $\|\cdot\|_{D}$.
\begin{figure}[t]
\begin{center}
	\begin{tikzpicture}
	\draw (1,1) -- (4,0) -- (2,3) -- cycle;
	\draw (2,3)  -- (5,2) -- (4,0) --cycle;
	\draw[very thick] (2,3)  -- (4,0);
	\draw[->] (3,1.5) --(4,2);

	\node at (3.6,2) {$\Vn_e$};
	\node at (4,-0.2) {$e$};
	\node at (2.7,1) {$T^{+}$};
	\node at (4,1) {$T^{-}$};
	\end{tikzpicture}
\end{center}
\caption{Notation for an interface} \label{fig:interface}
\end{figure}
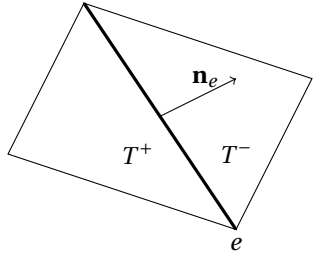

We define the \emph{broken} Sobolev space $H^s(\Om_h)$ as follows 
$$H^s(\Om_h):= \{ v \in L^2(\Omega): v|_{T} \in H^{s}(T) \text{ for all } T \in \Omega_h\},$$
with its corresponding broken norm $\|\cdot\|_{s,\Om_h}^2:= 
\displaystyle\sum_{T \in \Omega_h}\|(\,\cdot\,)|_T\|^2_{s,T}\,$.  %
%
When $s>\frac{1}{2}$, traces over the edges of elements are well-defined. 
Thus, for $e \subset \mathcal{F}^0_h$,  we define the jump of a function $v\in H^s(\Om_h)$ across $e$, as the following expression:
\begin{equation}\label{eq:scalar_jump}
	\llbracket v \rrbracket\big|_e := v\big|_{T^+} - v\big|_{T^-}, \qquad \forall e \in \mathcal{F}^{0}_h\,,
\end{equation}
where $v\big|_{T^+}$ and $v\big|_{T^-} $ are the traces over $e$ related with a predefined normal $\Vn_e$ (see Figure~\ref{fig:interface} for a reference of the interface notation). 

%

\subsection{Discrete spaces}

Let $\PP^p(T)$ be the space of polynomials of total degree $\leq p$ over  $T$. We consider the following discrete spaces:
\begin{equation}\label{eq:continuous_space}
	U^p_h :=\left\{ v \in C^0(\Om) : v|_T \in \mathbb{P}^p(T), \, \forall T\in \Om_h \right\},
\end{equation}
\begin{equation}\label{eq:broken_poly_space}
	V_h^p :=\left\{ v \in L^2(\Om) : v|_T \in \mathbb{P}^p(T), \, \forall T\in \Om_h \right\},
\end{equation}
and define the space of local bubble functions of degree $ \leq k$, with $k>d$ (see~\cite{ arnold1996mini}), by
\begin{equation}\label{eq:Bubble}
	\BB^k(\Om_h) := \left\{v \in H_0^1(\Omega): \, v|_T \in \PP^k(T) \bigcap H^1_0(T), \, \forall T \in \Om_h \right\}.
\end{equation}
Finally, we define the bubble-enriched continuous function space, with $k>\max\{p,d\}$, as 
 \begin{equation}\label{eq:BubbleSpace}
     U^{p, k}_h := U^p_h + \mathbb{B}^{k}(\Om_h).
 \end{equation}
 
As an example, in the case of triangular elements and  $k=3$, a local bubble function  over $T$ can be defined as a cubic function  spanned by $\lambda_1\lambda_2\lambda_3$, where $\Gl_1,\Gl_2,\Gl_3$ are the barycentric coordinates on $T$, see~\cite{ arnold1996mini, millar2021, Labanda2022}. 

\begin{rmrk}[Containment of discrete spaces]
When $k\leq p$ implies  $ U^{p, k}_h \equiv U^{p}_h $; alternatively, when $k>p$, the following discrete space contention occurs
  \begin{equation}
      U_h^p\subset U_h^{p,k}\subseteq U_h^{k}.
  \end{equation} 
\end{rmrk}

\subsection{Interpolation results}

Our convergence analysis follows~\cite{ burmanErn2007hpCIP} for the $hp$-continuous interior penalty (CIP) method, which uses trace and inverse-trace inequalities, local interpolation results, and error estimates for the $L^2(\Om)$-projection. Here is a summary of these results in two dimensions (i.e., $d=2$). 
We simplify notation by abbreviating the inequalities $ a \leq C\, b$ as $a \lesssim b$ whenever the positive constant $C$ is independent of the mesh and polynomial degree.

\begin{dfntn}[Admissible set] Given a simplex $T$, let $n_p := \dim\mathbb{P}^p(T)$. A nonempty set of nodes $\mathcal{A}=\{a_i\}_{1\leq i \leq n_p}$ of $T$ is admissible if and only if
$\mathcal A$ is unisolvent in $\mathbb{P}^{p}(T)$ and 
$\mathcal{A}\cap e$ is unisolvent in $\mathbb P^p(e)$, for all edges $e\subset\partial T$.
\end{dfntn}
Given a unisolvent set of nodes $\mathcal{A}$ of a simplex $T$, define:
$$
\mathbb{P}^p_{\mathcal{A}}(T) := \left\{ v \in \mathbb{P}^p(T) :  v(a_i)=0, \forall a_i \in \mathcal{A}\setminus\partial T\right\}.
$$

\begin{assmptn}\label{ass:reference}
 Let $\hat T$ be the reference element; for $\mathbb P ^p(\hat T)$, there exists an admissible set of nodes $\hat{\mathcal A}$ of $\hat T$, such that 
    \begin{align*}
        \| v\|_{\hat T} \lesssim p^{-1/2}\|v\|_{\partial \hat T}\,, \qquad \forall v \in \mathbb{P}_{\hat{\mathcal{A}}}^p(\hat T).
    \end{align*}
\end{assmptn}
In~\cite[Section~5.1]{ burmanErn2007hpCIP}, the authors establish that Assumption~\ref{ass:reference} leads to the following inequalities.
\begin{lmm}[Trace and inverse trace inequalities on triangles]
\label{lmm:hp-trace_ineq}
Under Assumption 1, 
 the following inequalities hold for any $T\in \Om_h$:
\begin{equation}\label{eq:trace-inequality}
\norm{v}_{\partial T} \lesssim \left(\dfrac{p^2}{h_T} \right)^{\frac{1}{2}} \norm{v}_T, \quad
  \forall v \in \mathbb{P}^p (T); 
  \end{equation}
\begin{equation}\label{trace2}
\norm{v}_{T} \lesssim \left(\dfrac{h_T}{p} \right)^{\frac{1}{2}} \norm{v}_{\partial T}, \quad\forall v \in \mathbb{P}^{p,0} (T),
\end{equation}
where $\mathbb P^{p,0}(T)$ denotes the subspace of $ \mathbb P ^{p}(T)$ spanned by those polynomials vanishing at all interior nodes of $T$.
\end{lmm}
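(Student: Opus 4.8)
The plan is to reduce both estimates to the reference element $\hat T$ by an affine pullback and then invoke the $p$-explicit bounds available there. For a shape-regular simplex $T$, let $F_T:\hat T\to T$, $F_T(\hat x)=B_T\hat x+b_T$, be the affine map, and write $\hat v:=v\circ F_T$. Quasi-uniformity gives $|\det B_T|\sim h_T^{d}$, so the change of variables yields the scaling relations
\begin{equation*}
\norm{v}_T^2 \sim h_T^{d}\,\norm{\hat v}_{\hat T}^2,
\qquad
\norm{v}_{\partial T}^2 \sim h_T^{d-1}\,\norm{\hat v}_{\partial \hat T}^2,
\end{equation*}
where the volume factor $h_T^{d}$ and the surface factor $h_T^{d-1}$ come from the $d$- and $(d-1)$-dimensional Jacobians, respectively, and the hidden constants depend only on the shape-regularity of the mesh, not on $h_T$ or $p$.

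For~\eqref{eq:trace-inequality} I would use the classical $p$-explicit polynomial trace inequality on the reference element, $\norm{\hat v}_{\partial\hat T}\lesssim p\,\norm{\hat v}_{\hat T}$, valid for every $\hat v\in\mathbb P^p(\hat T)$ with no admissibility required. Combining this with the scaling relations,
\begin{equation*}
\norm{v}_{\partial T}^2 \sim h_T^{d-1}\norm{\hat v}_{\partial \hat T}^2 \lesssim h_T^{d-1}p^2\norm{\hat v}_{\hat T}^2 \sim h_T^{d-1}p^2 h_T^{-d}\norm{v}_T^2 = \frac{p^2}{h_T}\,\norm{v}_T^2,
\end{equation*}
and taking square roots gives the first inequality (note the exponent $h_T^{d-1-d}=h_T^{-1}$ is dimension-independent).

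For~\eqref{trace2}, I would first observe that the interior-vanishing space $\mathbb P^{p,0}(T)$ coincides with $\mathbb P^p_{\mathcal A}(T)$ for the admissible set $\mathcal A=F_T(\hat{\mathcal A})$; since $F_T$ maps interior nodes of $\hat T$ to interior nodes of $T$, the pullback $\hat v$ of any $v\in\mathbb P^{p,0}(T)$ lies in $\mathbb P^p_{\hat{\mathcal A}}(\hat T)$. Assumption~\ref{ass:reference} then applies directly on $\hat T$, giving $\norm{\hat v}_{\hat T}\lesssim p^{-1/2}\norm{\hat v}_{\partial\hat T}$, and scaling as before yields
\begin{equation*}
\norm{v}_T^2 \sim h_T^{d}\norm{\hat v}_{\hat T}^2 \lesssim h_T^{d} p^{-1}\norm{\hat v}_{\partial\hat T}^2 \sim h_T^{d} p^{-1}h_T^{-(d-1)}\norm{v}_{\partial T}^2 = \frac{h_T}{p}\,\norm{v}_{\partial T}^2,
\end{equation*}
which gives the second inequality after taking square roots.

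The routine part is the affine scaling; the crux is the $p$-dependence. For~\eqref{eq:trace-inequality} the sharp power $p$ on $\hat T$ is the classical input one must have in hand, while for~\eqref{trace2} the entire content is Assumption~\ref{ass:reference}, whose role is precisely to supply the $p^{-1/2}$ gain for interior-vanishing polynomials on the fixed reference element—a delicate estimate, since a generic inverse trace inequality would lose, rather than gain, powers of $p$. I expect the main obstacle to be verifying that the hidden constants are genuinely independent of both $h_T$ and $p$: the $h_T$-independence is immediate from the clean Jacobian scaling, whereas the $p$-independence rests entirely on the two reference-element inputs and on the fact that admissibility, and hence the interior/boundary node structure, is preserved under the affine map $F_T$.
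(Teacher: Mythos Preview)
The paper does not supply its own proof of this lemma; it simply states that the inequalities follow from Assumption~\ref{ass:reference} and cites \cite[Section~5.1]{burmanErn2007hpCIP}. Your affine-pullback argument is correct and is precisely the standard route taken in that reference: reduce to the fixed reference simplex via the scaling relations $\norm{v}_T^2\sim h_T^d\norm{\hat v}_{\hat T}^2$ and $\norm{v}_{\partial T}^2\sim h_T^{d-1}\norm{\hat v}_{\partial\hat T}^2$, then invoke the sharp $p$-trace bound $\norm{\hat v}_{\partial\hat T}\lesssim p\,\norm{\hat v}_{\hat T}$ for~\eqref{eq:trace-inequality} and Assumption~\ref{ass:reference} for~\eqref{trace2}. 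Your identification of $\mathbb P^{p,0}(T)$ with $\mathbb P^p_{\mathcal A_T}(T)$ and the observation that the affine map preserves the interior/boundary node partition are exactly the points needed to transport Assumption~\ref{ass:reference} from $\hat T$ to $T$. There is nothing to correct.
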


%
 
\begin{dfntn}[Oswald Interpolation] Given $T\in\Omega_h$, let $\mathcal A_T$ be the image of the admissible set $\hat{\mathcal A}$ (see Assumption~\ref{ass:reference}) under an affine transformation mapping from $\hat T$ onto $T$. For each node $a\in\mathcal A_T$, consider the patch of elements
 $\Omega_{a}:= \big\{ T \in \Om_h, a \in \overline{T}  \big\}$.
The Oswald interpolation operator $I_{Os}:V_h^p\to U_h^p$  (see, e.g.,~\cite[Section~5.5.2]{ burmanErn2007hpCIP}),  
is defined locally by setting
\begin{equation}
	I_{Os} (v_h) (a) := \dfrac{1}{|\Omega_{a}|} \sum_{T\in  \Omega_a} v_h\big|_T(a)\,, \qquad \forall v_h\in V_h^p,
 \label{eq:Oswald}
\end{equation}
where $|\Omega_a|$ stands for the cardinality of $\Omega_a$.
\end{dfntn}

The following Lemma (proved in~\cite[Lemma 5.3]{ burmanErn2007hpCIP}) establishes the interpolation error associated with $I_{Os}$. 
\begin{lmm}[Interpolation error]\label{hp_interp_error} 
	For all $T \in \Om_h$, the following estimate holds:
	\begin{equation*}
		\norm{v_h - I_{Os} v_h}_T \lesssim 	%
		\left(\dfrac{h_T}{p} \right)^{\frac{1}{2}} \sum_{e\in \mathcal{F}_T} 
		\norm{\llbracket v_h \rrbracket}_e\,,\qquad \forall v_h \in V^p_h,
	\end{equation*}
 where $\mathcal{F}_T=\left\{e\in \mathcal{F}_h: e\cap\overline{T}\neq\emptyset\right\}$.
\end{lmm}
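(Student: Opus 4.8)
The plan is to exploit the fact that the Oswald operator only alters nodal values shared by several elements, so that the error $w := v_h - I_{Os}v_h$ restricted to $T$ is a polynomial vanishing at all interior nodes, and then to trade the volume norm for a boundary norm through the inverse-trace inequality~\eqref{trace2}. First I would observe that for any node $a \in \mathcal{A}_T$ lying in the interior of $T$, the patch $\Omega_a$ reduces to the single element $T$, so that $I_{Os}v_h(a) = v_h|_T(a)$ and hence $w|_T(a) = 0$. Thus $w|_T \in \mathbb{P}^{p,0}(T)$, and~\eqref{trace2} of Lemma~\ref{lmm:hp-trace_ineq} applies, giving
\begin{equation*}
\norm{w}_T \lesssim \left(\frac{h_T}{p}\right)^{1/2}\norm{w}_{\partial T} = \left(\frac{h_T}{p}\right)^{1/2}\left(\sum_{e \subset \partial T}\norm{w}_e^2\right)^{1/2}.
\end{equation*}
It therefore remains to control each boundary contribution $\norm{w}_e$ by jumps, with a constant independent of both $h$ and $p$.

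For the edge-by-edge control I would use that $I_{Os}v_h \in U_h^p$ is continuous, so its jumps vanish and $\llbracket w \rrbracket_e = \llbracket v_h \rrbracket_e$ on every interior edge. At each node $a$ in the relative interior of an edge $e \subset \partial T$ the patch $\Omega_a$ consists only of the two elements sharing $e$, whence a direct computation yields $w(a) = \tfrac12 \llbracket v_h \rrbracket_e(a)$; at the vertex nodes of $e$, by contrast, $w(a)$ is a convex combination over the finite, uniformly bounded patch $\Omega_a$ of pairwise trace differences $v_h|_T(a) - v_h|_{T'}(a)$, each of which telescopes along a path of adjacent elements into a sum of jump values $\llbracket v_h \rrbracket_{e'}(a)$ over the edges $e' \in \mathcal{F}_T$ incident to $a$. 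Since $w|_e \in \mathbb{P}^p(e)$ is determined by its values on the unisolvent set $\mathcal{A}_T \cap e$ (admissibility), I would then split $w|_e = \tfrac12 \llbracket v_h \rrbracket_e + r_e$, where $r_e \in \mathbb{P}^p(e)$ vanishes at all edge-interior nodes and carries only the vertex corrections. The first term contributes $\norm{\llbracket v_h \rrbracket}_e$ directly.

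For the remainder $r_e$, I would pass to the reference edge and invoke the admissibility of the node set from Assumption~\ref{ass:reference}, together with an affine scaling argument, to bound $\norm{r_e}_e$ by the vertex jump data, converting the pointwise vertex values into $L^2(e')$ jump norms over the incident edges. Summing the resulting edge estimates over $e \subset \partial T$ and over the finitely many edges of $\mathcal{F}_T$ yields $\norm{w}_{\partial T} \lesssim \sum_{e \in \mathcal{F}_T} \norm{\llbracket v_h \rrbracket}_e$, which combined with the first display gives the claim. The main obstacle is precisely this last step: obtaining a $p$-\emph{robust} passage from pointwise vertex values to $L^2$ jump norms, since a naive Lagrange-basis estimate degrades with the polynomial degree. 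This is exactly where the admissibility of the nodal set and the reference-element control it provides (Assumption~\ref{ass:reference}, and with it the inverse-trace inequality~\eqref{trace2}) are essential to keep the constant independent of $p$.
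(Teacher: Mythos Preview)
The paper does not supply its own proof of this lemma; it simply records the statement and cites \cite[Lemma~5.3]{burmanErn2007hpCIP}. Your strategy --- recognizing that $w|_T\in\mathbb{P}^{p,0}(T)$ and applying the inverse-trace inequality~\eqref{trace2}, then decomposing on each edge as $w|_e=\tfrac12\llbracket v_h\rrbracket_e+r_e$ with $r_e$ carrying only the vertex corrections --- is exactly the standard route, and is what the cited argument does.

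Your honesty about the vertex step is well placed, but the sketch can be made concrete. Assumption~\ref{ass:reference} as stated is two-dimensional; what closes the argument is its one-dimensional (edge) counterpart: for $r\in\mathbb{P}^p(e)$ vanishing at all interior nodes of $e$ one has
\[
\norm{r}_e \lesssim \Big(\dfrac{h_e}{p^2}\Big)^{1/2}\big(|r(a_1)|+|r(a_2)|\big),
\]
where $a_1,a_2$ are the endpoints. This holds for Gauss--Lobatto-type node families (the endpoint Lagrange functions have $L^2(e)$-mass of order $h_e/p^2$) and is precisely the 1D analogue of~\eqref{trace2}. Pairing it with the 1D polynomial trace bound $|q(a)|\lesssim (p^2/h_{e'})^{1/2}\norm{q}_{e'}$, applied to each jump $\llbracket v_h\rrbracket_{e'}$ evaluated at the shared vertex, makes the powers of $p$ cancel exactly and delivers $\norm{r_e}_e\lesssim\sum_{e'\in\mathcal F_T}\norm{\llbracket v_h\rrbracket}_{e'}$ with a constant independent of $h$ and $p$. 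If instead one bounds the vertex Lagrange functions crudely by $h_e^{1/2}$, the estimate loses a full factor of $p$ --- so the admissibility of the nodal set really is the crux, as you suspected.
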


The following Lemma (proved in~~\cite[Lemma 5.4]{ burmanErn2007hpCIP}) establishes an estimate for the $L^2(\Omega)$-projection error. 
\begin{lmm}[Error estimate for the $L^2(\Om)$-projection]\label{hp_error_estimate_L2}%
Let $~\Pi_h: L^2(\Om) \mapsto U^p_h$ be the $L^2(\Om)$-orthogonal projector onto $U^p_h$. For all $u \in H^s(\Om), s\ge 1$, the following estimates~\eqref{eq:estimateL2} and~\eqref{eq:estimateL2grad} in Lemma~\ref{hp_error_estimate_L2} hold:
\begin{equation} \label{eq:estimateL2}
    \norm{u -\Pi_h u}_{\Om} \lesssim p^{\frac{1}{4}} \left(\dfrac{h}{p}\right)^r \norm{u}_{r, \Om},
\end{equation}
\begin{equation} \label{eq:estimateL2grad}
    \norm{\nabla\left(u -\Pi_h u\right)}_{\Om} \lesssim p^{\frac{5}{4}} \left(\dfrac{h}{p}\right)^{r-1} \norm{u}_{r, \Om},
\end{equation}
with $r = \min(p+1, s)$.
\end{lmm}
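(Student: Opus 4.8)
The plan is to treat the two estimates separately, exploiting that $\Pi_h$ is the $L^2(\Om)$-best approximation in $U^p_h$ for the first estimate, and then bootstrapping to the gradient estimate through an inverse inequality.

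For~\eqref{eq:estimateL2}, I would first invoke the best-approximation property of the orthogonal projector, so that for any continuous $hp$-interpolant $I_{hp}u\in U^p_h$,
\begin{equation*}
\norm{u-\Pi_h u}_{\Om}=\inf_{w\in U^p_h}\norm{u-w}_{\Om}\leq \norm{u-I_{hp}u}_{\Om}.
\end{equation*}
I would take $I_{hp}$ to be a standard continuous quasi-interpolation operator (built, e.g., from the admissible nodal structure of Assumption~\ref{ass:reference} composed with an element-wise polynomial approximation), whose local $hp$-approximation estimate reads $\norm{u-I_{hp}u}_T\lesssim p^{1/4}(h_T/p)^{r}\norm{u}_{r,\widetilde T}$ on a local element patch $\widetilde T$. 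Squaring, summing over $T\in\Om_h$, and using the finite overlap of the patches together with the quasi-uniformity $h_T\simeq h$ then yields the global bound with the claimed factor $p^{1/4}(h/p)^{r}$.

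For~\eqref{eq:estimateL2grad}, the best-approximation property is no longer available, since $\Pi_h$ is optimal in $L^2(\Om)$ but not in $H^1(\Om)$. Instead I would insert the same interpolant and split
\begin{equation*}
\norm{\nabla(u-\Pi_h u)}_{\Om}\leq \norm{\nabla(u-I_{hp}u)}_{\Om}+\norm{\nabla(I_{hp}u-\Pi_h u)}_{\Om}.
\end{equation*}
The first term is controlled by the $hp$-gradient interpolation estimate, which contributes at most $(h/p)^{r-1}$ up to a mild power of $p$. For the second term, $I_{hp}u-\Pi_h u$ is piecewise polynomial, so I would apply the standard $hp$-inverse inequality $\norm{\nabla v_h}_T\lesssim (p^2/h_T)\norm{v_h}_T$ (consistent with the $p^2/h$ scaling already seen in Lemma~\ref{lmm:hp-trace_ineq}), then use quasi-uniformity to factor out $p^2/h$ and the triangle inequality $\norm{I_{hp}u-\Pi_h u}_{\Om}\leq\norm{I_{hp}u-u}_{\Om}+\norm{u-\Pi_h u}_{\Om}$, which is bounded by $p^{1/4}(h/p)^{r}\norm{u}_{r,\Om}$ thanks to~\eqref{eq:estimateL2}. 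Multiplying, the exponents combine as $(p^2/h)\,p^{1/4}(h/p)^{r}=p^{5/4}(h/p)^{r-1}$, giving the stated bound.

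The main obstacle, and the point requiring care, is the global nature of $\Pi_h$: because $U^p_h$ is $C^0$-conforming, the projection error is \emph{not} localizable element by element as it would be for a broken space, which is why one cannot simply assemble local $L^2$-projection estimates and must instead route everything through the global best-approximation property and a continuous interpolant. The second delicate point is the bookkeeping of the fractional powers of $p$: the suboptimal exponents $p^{1/4}$ and $p^{5/4}$ arise precisely from combining the $hp$-approximation factor with the $p^2$ loss of the inverse inequality, so each scaling step must track the $p$-dependence exactly rather than absorbing it into the hidden constant.
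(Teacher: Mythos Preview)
The paper does not prove this lemma; it merely records the statement and cites \cite[Lemma~5.4]{burmanErn2007hpCIP}. So there is no in-paper argument to compare against, and your proposal has to be judged on its own.

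Your overall scheme is the right one: best approximation for~\eqref{eq:estimateL2}, then an inverse inequality bootstrapped from~\eqref{eq:estimateL2} for~\eqref{eq:estimateL2grad}. The arithmetic $(p^2/h)\cdot p^{1/4}(h/p)^r=p^{5/4}(h/p)^{r-1}$ is correct, and your remark that the global nature of $\Pi_h$ forces the detour through a continuous interpolant is exactly the conceptual point.

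The gap is that you assume, rather than establish, a continuous interpolant $I_{hp}\in U_h^p$ with $\norm{u-I_{hp}u}_T\lesssim p^{1/4}(h_T/p)^{r}\norm{u}_{r,\widetilde T}$. That estimate \emph{is} the substance of the lemma in the $L^2$ case; the specific factor $p^{1/4}$ is not a generic feature of Cl\'ement- or Scott--Zhang-type operators and must be produced. In the cited reference the interpolant is $I_{Os}\circ\Pi_h^*$, with $\Pi_h^*$ the broken $L^2$-projection onto $V_h^p$: the $p^{1/4}$ emerges from combining Lemma~\ref{hp_interp_error} (the $(h_T/p)^{1/2}$ gain coming from the inverse-trace inequality~\eqref{trace2}) with the sharp edge bound $\norm{u-\Pi_h^*u}_{\partial T}\lesssim p^{1/4}(h/p)^{r-1/2}\norm{u}_{r,T}$ for the jumps. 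Your parenthetical hint ``built from the admissible nodal structure of Assumption~\ref{ass:reference} composed with an element-wise polynomial approximation'' points in the right direction, but the $p$-bookkeeping through that construction is precisely the work that is missing from the proposal.
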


\section{Residual minimization method onto bubble enriched test spaces}\label{sec:Residual}

%
\subsection{Preliminaries}

Assume that $\nabla \cdot \Vb = 0$ and  $\mu(x)> \mu_0>0$ (a.e. in $\Omega$); thus, fulfilling condition~\eqref{cond:advection}. Related to Problem~\eqref{eq:model_problem}, we consider the bilinear form $a:H^1(\Omega)\times H^1(\Omega)\to \mathbb{R}$ defined by
\begin{equation}
    a(v, w) := \big(\mu v, w \big)_{\Om} -\big(v, \Vb \cdot \nabla w\big)_{\Om} + \big(\Vb \cdot \Vn\,v, w \big)_{\partial\Om^+}, \quad \forall v, w \in H^1(\Omega), \label{eq:biform}
\end{equation}
and the \textit{hp}--CIP bilinear form $j_{h,k}:H^s(\Omega_h)\times H^s(\Omega_h)\to \mathbb{R}$ (with $s>\frac{3}{2}$) defined by
\begin{equation}
    j_{h,k}(v, w) := \sum_{e\in \mathcal{F}^0_h} \Gg_{h,k} \big( \llbracket \nabla v\cdot \Vn_e\rrbracket,\, \llbracket \nabla w \cdot \Vn_e\rrbracket \big)_e, \quad \forall v, w \in H^s(\Omega_h), \label{eq:jdef}
\end{equation}
where $\Gg_{h,k}:=\dfrac{h^2_e}{k^{\alpha}}\,  \| \Vb \cdot \Vn_e \|_{L^{\infty}(e)}$  is the stabilization parameter (see~\cite{ burman2006edgebased, burmanErn2007hpCIP}).
We determine the exponent $\alpha$ using the \textit{hp}--convergence analysis and $k$ is related to the particular polynomial order used in the discrete counterparts. 

 We define the following norm for functions $w \in  H^s(\Omega_h)$, with  $s>\frac{3}{2}$ (see~\cite[Eq. 8]{ burmanErn2007hpCIP}) :
\begin{equation}\label{eq:coercive_norm}
    \tnorm{w}^2_{h,k}:=\big\| \mu_0^{1/2} w \big\|^2_\Om + \dfrac{1}{2}\big\| |\Vb \cdot \Vn|^{1/2} w\big\|^2_{\bOm} + j_{h,k}(w, w), \qquad \forall\, w\in H^s(\Omega_h).
\end{equation}
%




\begin{lmm}[Coercivity]\label{lem:coercive} 
For $s>\frac{3}{2}$, the bilinear form  $b_h(\cdot,\cdot):=a(\cdot,\cdot) + j_{h,k}(\cdot,\cdot)$ is coercive in $H^s(\Omega_h)$ with respect to the norm defined in ~\eqref{eq:coercive_norm}.
\end{lmm}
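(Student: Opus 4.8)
\emph{Proof plan.} The plan is to observe that the penalty term $j_{h,k}(w,w)$ occurs identically in $b_h(w,w)$ and in the norm~\eqref{eq:coercive_norm}, so that the whole matter reduces to bounding the advection--reaction form $a(w,w)$ from below by the remaining two summands. Specifically, I would establish the inequality
\begin{equation*}
a(w,w) \;\geq\; \norm{\mu_0^{1/2}w}_{\Om}^2 + \tfrac12\,\norm{|\Vb\cdot\Vn|^{1/2}w}_{\bOm}^2,
\end{equation*}
after which adding $j_{h,k}(w,w)$ to both sides gives $b_h(w,w)\geq\tnorm{w}_{h,k}^2$, i.e.\ coercivity with unit constant.

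For the reaction term the hypothesis $\mu\geq\mu_0$ (which, together with $\nabla\cdot\Vb=0$, is exactly condition~\eqref{cond:advection}) yields $(\mu w,w)_\Om\geq\mu_0\norm{w}_\Om^2=\norm{\mu_0^{1/2}w}_\Om^2$. The crux is the skew-symmetric part $-(w,\Vb\cdot\nabla w)_\Om$ combined with the outflow term $(\Vb\cdot\Vn\,w,w)_{\bOm_+}$ from~\eqref{eq:biform}. Using $\nabla\cdot\Vb=0$, integration by parts gives $-(w,\Vb\cdot\nabla w)_\Om=-\tfrac12\int_{\bOm}(\Vb\cdot\Vn)\,w^2$. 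I would then partition $\bOm=\bOm_-\cup\bOm_0\cup\bOm_+$, discard $\bOm_0$ (where $\Vb\cdot\Vn=0$), and add the outflow contribution; keeping track of the sign of $\Vb\cdot\Vn$ (negative on $\bOm_-$, positive on $\bOm_+$) collapses all boundary terms to exactly $\tfrac12\int_{\bOm}|\Vb\cdot\Vn|\,w^2=\tfrac12\norm{|\Vb\cdot\Vn|^{1/2}w}_{\bOm}^2$.

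The delicate point, and the one I expect to be the main obstacle, is the legitimacy of the integration by parts with no interior-facet remainder. Carrying out the divergence theorem element by element leaves, on each interior facet $e=\partial T^+\cap\partial T^-$, a contribution proportional to the jump $\llbracket w\rrbracket$ of $w$ across $e$, which is \emph{not} controlled by $\tnorm{\cdot}_{h,k}$, since $j_{h,k}$ penalizes only the normal-gradient jump $\llbracket\nabla w\cdot\Vn_e\rrbracket$ and never $\llbracket w\rrbracket$ itself. These contributions vanish precisely when $w$ is single-valued across facets, i.e.\ $w\in H^1(\Om)$. This is consistent with the statement: although the broken regularity $w\in H^s(\Om_h)$ with $s>\tfrac32$ is needed merely so the traces in~\eqref{eq:jdef} are well defined, the form $a$ in~\eqref{eq:biform} lives on $H^1(\Om)$, and the enriched space~\eqref{eq:BubbleSpace} satisfies $U_h^{p,k}\subset C^0(\Om)\subset H^1(\Om)$. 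I would therefore carry out the estimate for continuous $w$ (equivalently $w\in H^s(\Om_h)\cap H^1(\Om)$), which is automatic on the discrete spaces of interest, and emphasize that it is exactly this continuity that makes the facet jumps disappear.
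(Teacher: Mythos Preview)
Your proposal is correct and follows essentially the same approach as the paper: use $\nabla\cdot\Vb=0$ to rewrite $-(w,\Vb\cdot\nabla w)_\Om$ as $-\tfrac12(\Vb\cdot\Vn\,w,w)_{\bOm}$, combine with the outflow term to obtain $\tfrac12\norm{|\Vb\cdot\Vn|^{1/2}w}_{\bOm}^2$, invoke $\mu\geq\mu_0$ for the reaction part, and add $j_{h,k}(w,w)$. You are in fact more careful than the paper, which simply asserts the integration-by-parts identity ``for all $w\in H^s(\Om_h)$'' without commenting on the interior-facet remainders; your observation that the argument really requires $w\in H^1(\Om)$ (which holds for the discrete space $U_h^{p,k}$ and is consistent with the stated domain of $a$) is a valid clarification.
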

\begin{proof}
Since $\nabla\cdot\Vb=0$, observe that $\big(w, \Vb \cdot \nabla w\big)_{\Om}=\frac{1}{2}\big(\Vb\cdot \Vn w,w\big)_{\partial\Omega}$, for all $w\in H^s(\Omega_h)$. Hence,
$$
a(w,w)=\big(\mu w,w\big)_\Omega-\frac{1}{2}\big(\Vb\cdot \Vn w,w\big)_{\partial\Omega}
+ \big(\Vb\cdot \Vn w,w\big)_{\partial\Omega^+}=\big(\mu w,w\big)_\Omega
+\dfrac{1}{2}\big\| |\Vb \cdot \Vn|^{1/2} w\big\|^2_{\bOm}\,.$$
Thus, since $\mu(x)>\mu_0$, we get
$$
b_h(w,w)=a(w,w)+j_{h,k}(w,w)\geq \tnorm{w}^2_{h,k},\quad\forall w\in H^s(\Omega_h). 
$$
\end{proof}

\subsection{Continuous interior penalty method onto bubble enriched continuous space.}\label{sec:5}

Consider the CIP formulation  in the  bubble enriched continuous spaces $U_h^{p,k}$ as follows,
\begin{equation}\label{eq:CIP_ discrete_bubbles}
\left\{\begin{array}{l}
	\text{Find } \theta_h \in U^{p,k}_h \text{ such that:}\\
	 b_h(\theta_h, \nu_h) =  l_h(\nu_h), \quad \forall \, \nu_h \in U^{p,k}_h,
\end{array}
\right.
\end{equation}
where $b_h(\theta_h, \nu_h)$  is the sum of the $a$ and $j$ forms defined in~\eqref{eq:biform} and~\eqref{eq:jdef}, and $l_h(\nu_h)$ is $(f,\nu_h)_\Omega$.

In the following, we assume that  $u \in H^s(\Om), ~s> \frac{3}{2}$ solves~\eqref{eq:model_problem} and $\theta_h\in U^{p,k}_h$ solves~\eqref{eq:CIP_ discrete_bubbles}. Thus, the formulation~\eqref{eq:CIP_ discrete_bubbles} satisfies the following properties,
\begin{lmm}[Coercivity]\label{lmm:Stability_bubbles}
    For all $\nu_h \in U^{p,k}_h$ then,
    \begin{equation}
        b_h(\nu_h, \nu_h) \gtrsim \tnorm{\nu_h}^2_{k,h}.
    \end{equation}
    \begin{proof}
        Follows from Lemma~\ref{lem:coercive}.
    \end{proof}
\end{lmm}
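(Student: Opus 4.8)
The plan is to recognize this lemma as an immediate specialization of Lemma~\ref{lem:coercive}, whose proof already establishes coercivity on the \emph{entire} broken Sobolev space $H^s(\Om_h)$ with $s>\tfrac{3}{2}$, with hidden constant equal to one. The only substantive point to verify is that the bubble-enriched continuous space $U^{p,k}_h$ is contained in $H^s(\Om_h)$ for such $s$; once this inclusion is in place, the bilinear form $b_h$ and the triple-bar norm~\eqref{eq:coercive_norm} are well-defined on $U^{p,k}_h$ and Lemma~\ref{lem:coercive} can be invoked verbatim.

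First I would establish the inclusion $U^{p,k}_h\subset H^s(\Om_h)$ for every $s$. By construction~\eqref{eq:BubbleSpace}, any $\nu_h\in U^{p,k}_h$ decomposes as $\nu_h=v_h+w_h$ with $v_h\in U^p_h$ and $w_h\in\BB^k(\Om_h)$. From definitions~\eqref{eq:continuous_space} and~\eqref{eq:Bubble}, the restriction of each summand to any element $T\in\Om_h$ is a polynomial, so $\nu_h|_T\in\PP^{k}(T)\subset C^\infty(T)$ (recall $k>\max\{p,d\}$). Hence $\nu_h|_T\in H^s(T)$ for every $s\ge 0$ and every $T\in\Om_h$, which by the definition of the broken space means $\nu_h\in H^s(\Om_h)$ for all $s$, in particular for any $s>\tfrac{3}{2}$. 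Being piecewise polynomial, $\nu_h$ has finite gradient traces on each interior edge $e\in\mathcal F^0_h$, so the gradient-jump contributions entering $j_{h,k}$ in~\eqref{eq:jdef} are well-defined and both $b_h(\nu_h,\nu_h)$ and $\tnorm{\nu_h}^2_{h,k}$ are finite.

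With the inclusion in hand, I would simply apply Lemma~\ref{lem:coercive} with $w=\nu_h\in U^{p,k}_h\subset H^s(\Om_h)$, which yields $b_h(\nu_h,\nu_h)\ge\tnorm{\nu_h}^2_{h,k}$ and thus the claimed estimate $b_h(\nu_h,\nu_h)\gtrsim\tnorm{\nu_h}^2_{k,h}$, here with hidden constant one. I do not expect any genuine obstacle: the argument is a one-line consequence of the continuous-level coercivity, and the sole (trivial) ingredient is the regularity inclusion of the discrete space into the broken Sobolev space of order $s>\tfrac{3}{2}$. The only point worth flagging is the harmless notational mismatch between $\tnorm{\cdot}_{h,k}$ and $\tnorm{\cdot}_{k,h}$, which denote the same quantity~\eqref{eq:coercive_norm}.
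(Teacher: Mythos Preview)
Your proposal is correct and follows exactly the paper's approach: the paper's proof is the single line ``Follows from Lemma~\ref{lem:coercive},'' and you have merely made explicit the (trivial) inclusion $U^{p,k}_h\subset H^s(\Om_h)$ needed to invoke that lemma. Your observation about the notational mismatch $\tnorm{\cdot}_{h,k}$ versus $\tnorm{\cdot}_{k,h}$ is also accurate.
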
 
\begin{lmm}[Consistency]\label{lmm:concistencyBubbles} 
Let $u \in H^s(\Om), s>\frac{3}{2}$ solve~\eqref{eq:model_problem} and let $\theta_h\in U_h^{p,k}$ solve~\eqref{eq:CIP_ discrete_bubbles}. Then, for all $v_h \in U^{p,k}_h$,
\begin{equation}
    b_h(u-\theta_h, v_h) = 0.
\end{equation}
\end{lmm}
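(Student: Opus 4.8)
The plan is to establish Galerkin orthogonality by showing that the exact solution $u$ satisfies the \emph{same} discrete identity as $\theta_h$, and then subtract. Since $\theta_h$ solves~\eqref{eq:CIP_ discrete_bubbles}, we have $b_h(\theta_h,v_h)=l_h(v_h)=(f,v_h)_\Om$ for every $v_h\in U_h^{p,k}$. It therefore suffices to prove that $b_h(u,v_h)=(f,v_h)_\Om$ as well; subtracting the two identities and invoking bilinearity of $b_h=a+j_{h,k}$ then yields $b_h(u-\theta_h,v_h)=0$.

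First I would treat the advective--reactive part $a(u,v_h)$. Starting from the strong equation $\Vb\cdot\nabla u+\mu u=f$ in~\eqref{eq:model_problem}, I test against $v_h\in U_h^{p,k}$, integrate over $\Om$, and integrate the advective term by parts. Because $\nabla\cdot\Vb=0$, this gives $(\Vb\cdot\nabla u,v_h)_\Om=-(u,\Vb\cdot\nabla v_h)_\Om+(\Vb\cdot\Vn\,u,v_h)_{\bOm}$. Splitting the boundary integral over $\partial\Omega_-\cup\partial\Omega_0\cup\partial\Omega_+$, the contribution on $\partial\Omega_0$ vanishes since $\Vb\cdot\Vn=0$ there, while the contribution on $\partial\Omega_-$ vanishes by the inflow condition $u|_{\partial\Omega_-}=0$. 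Only the outflow term $(\Vb\cdot\Vn\,u,v_h)_{\partial\Omega_+}$ survives, and collecting the reaction and advection contributions recovers exactly the form~\eqref{eq:biform}, so that $a(u,v_h)=(f,v_h)_\Om$ for all $v_h\in U_h^{p,k}$.

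Next I would show that the CIP stabilization contributes nothing when one argument is the exact solution, i.e.\ $j_{h,k}(u,v_h)=0$. This is where the regularity hypothesis $u\in H^s(\Om)$ with $s>\tfrac32$ enters: it guarantees $\nabla u\in H^{s-1}(\Om)$ with $s-1>\tfrac12$, so the normal derivative $\nabla u\cdot\Vn_e$ admits a single-valued trace on every interior facet $e\in\mathcal F^0_h$. Consequently $\llbracket\nabla u\cdot\Vn_e\rrbracket=0$ on each such $e$, and every summand in the definition~\eqref{eq:jdef} of $j_{h,k}$ vanishes.

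Combining the two computations gives $b_h(u,v_h)=a(u,v_h)+j_{h,k}(u,v_h)=(f,v_h)_\Om=l_h(v_h)$, which matches the discrete equation satisfied by $\theta_h$; subtracting completes the argument. The only genuinely delicate point is justifying the integration by parts and the vanishing of the facet jumps under the stated regularity, namely that the traces of $u$ and of $\nabla u$ are well defined (as already noted for the graph space $W$ and secured here by $s>\tfrac32$), so that both the boundary manipulation for $a$ and the jump argument for $j_{h,k}$ are legitimate. Everything else is routine bilinear bookkeeping.
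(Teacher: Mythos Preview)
Your proof is correct and follows essentially the same approach as the paper: both arguments show $j_{h,k}(u,v_h)=0$ via the regularity $s>\tfrac32$ (so that $\llbracket\nabla u\cdot\Vn_e\rrbracket=0$), reduce $b_h(u,v_h)$ to $a(u,v_h)$, identify this with $(f,v_h)_\Om$, and subtract the discrete equation. You simply supply more detail for the step $a(u,v_h)=(f,v_h)_\Om$---the integration by parts with $\nabla\cdot\Vb=0$ and the boundary splitting---which the paper takes for granted.
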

\begin{proof}
    Since $u\in H^s(\Omega)$, $q>\frac{3}{2}$, the jump $[\nabla u\cdot n]_e=0$, for all $e\in \mathcal{F}$. Thus, $j_{h,k}(u,v_h)=0$ for all $v_h\in U^{p,k}$.
    Thus,
    \begin{align*}
       b_h(u-\theta_h, v_h) & = b_h(u, v_h)-b(\theta_h,v_h)\\
       & = a(u, v_h)-(f,v_h)=0.
    \end{align*} 
\end{proof}

\subsection{Residual minimization}
\citet{ calo2020ASFEM} present the Adaptive Stabilized Finite Element Method (AS-FEM), which combines residual minimization with an inf-sup stable discretization (e.g., a discontinuous Galerkin (dG) formulation) to deliver a robust on-the-fly adaptive method. Consequently, this method delivers a stabilized approximation of the solution and a residual representative in the dG space that drives adaptivity with no further a-posteriori error analysis.
In the following, we enrich the discretization space with bubbles $U_h^{p,k}$, see~\eqref{eq:BubbleSpace}, and use it as a test space; a choice that guarantees a distance between the trial and the test space, which is suitable for residual minimization and adaptive mesh refinement guided by the built-in error representative. We adopt the CIP formulation~\eqref{eq:CIP_ discrete_bubbles}, which is coercive-stable, and show a new result inspired by~\cite{ burmanErn2007hpCIP} for the a-priori error estimate for the residual minimization method that relies on boundedness and discrete coercivity properties of $b_h(\cdot, \cdot)$. In the abstract setting of AS-FEM, we consider two real Hilbert spaces $U, V$,  and a conforming subspace $U^p_h$ of either $U$ or $V$. In addition, we assume that the discrete variational formulation satisfies Lemmas~\ref{lmm:Stability_bubbles} and~\ref{lmm:concistencyBubbles}. Thus, the main idea behind AS-FEM is finding $u_h \in U^p_h$ using a well-posed discrete variational formulation (e.g., discrete problem~\eqref{eq:CIP_ discrete_bubbles}) set in a discrete space, which can be chosen as $U^{p,k}_h$, and find a residual representative of the error in the $\tnorm{\cdot}_{h,p}$ via residual minimization in the discrete-dual space of that discrete space $\left(U^{p,k}_h\right)^*$.

Let
\begin{equation}\label{eq:B_op}
\left\{\begin{array}{l}
	B_h : U^{p,k}_h \mapsto \left(U^{p,k}_h\right)^*\\
	\\
	\qquad w_h \mapsto b_h(w_h, \cdot)
\end{array}
\right. ,
\end{equation}
and let $R^{-1}_h$ be the inverse of the Riesz map defined as:
\begin{equation}\label{eq:Riesz}
\left\{\begin{array}{l}
	R_h : U^{p,k}_h \mapsto \left(U^{p,k}_h\right)^{*}\\
	\\
	\qquad \, \big\langle R_h \tau_h, \nu_h \big\rangle_{\left(U^{p,k}_h\right)^* \times U^{p,k}_h} := (\tau_h, \nu_h)_{U^{p,k}_h}
\end{array}
\right..
\end{equation}
In addition, the dual triple norm $\tnorm{\cdot}_{\left(U^{p,k}_h\right)^*}$ is
\begin{equation}\label{eq:dual_norm}
\tnorm{\tau}^*_{k,h} := \underset{0\neq \nu_h \in U^{p,k}_h}{\sup} \dfrac{\big\langle \tau, \nu_h \big\rangle_{\left(U^{p,k}_h\right)^*  \times U^{p,k}_h}}{\tnorm{\nu_h}_{k,h}}, \quad \forall \tau \in \left(U^{p,k}_h\right)^{*}.
\end{equation}

Then, instead of solving the discrete problem~\eqref{eq:CIP_ discrete_bubbles}, we solve the following minimization problem,
\begin{subequations}
\begin{empheq}[left=\empheqlbrace]{align}
\text{Find } & u_h \in U_h^p\subset U,  \text{ such that :} \nonumber\\
u_h &=\argmin_{w_h\in U_h^{p}} \frac{1}{2}\tnorm{l_h(\cdot)-b_h(w_h,\cdot)}^2_{\left(U_h^{p,k}\right)^*} = \argmin_{w_h\in U_h^p}\frac{1}{2}\tnorm{R_h^{-1}B(\theta_h-w_h) }^2_{U_h^{p,k}}
\end{empheq}
\label{eq:resmin}
\end{subequations}
where $\theta_h \in U^{p,k}_h$ is the solution of the discrete variational problem~\eqref{eq:CIP_ discrete_bubbles}.

Problem~\eqref{eq:resmin} is equivalent to the following saddle-point problem (see~\cite{ CohDahWelM2AN2012}).
\begin{subequations}
\begin{empheq}[left=\empheqlbrace]{align}
   \text{Find }  (\varepsilon_h, u_h) \in U^{p,k}_h \times U^p_h, & \text{ such that:} \nonumber\\
        (\varepsilon_h  ,  \nu_h)_{U^{p,k}_h} + b_h(u_h \, ,  \nu_h ) & =   l_h(\nu_h),   &\quad \forall\, \nu_h \in U^{p,k}_h, \label{eq:residual-a}\\
        b_h(w_h \, , \, \varepsilon_h) & = \  0,  &\quad \forall\, w_h \in U^p_h, \label{eq:residual-b}
  \end{empheq}
  \label{eq:residual}
\end{subequations}
where $\varepsilon_h \in U^{p,k}_h$ is the residual representative in terms of $\theta_h \in U^{p,k}_h$ and $u_h \in U^p_h$ is the minimizer of the quadratic functional~\eqref{eq:resmin}. Solving the saddle-point problem~\eqref{eq:residual} has several interesting properties. Firstly, the system inherits the discrete stability of the weak formulation~\eqref{eq:CIP_ discrete_bubbles}. Secondly, the residual representative $\varepsilon_h$ is an efficient and robust error estimate for automatic adaptivity.

We now state an a-priori error estimate for the residual minimization problem~\eqref{eq:residual}, our main result. 

\begin{assmptn}\label{Assump1} Let $u \in H^s(\Om), s>\frac{3}{2}$ solve~\eqref{eq:model_problem} and let $u_h\in U_h^{p}$ solve~\eqref{eq:residual}.

Then, there exists a continuous operator $\Pi_h:U\to U^p_h$, such that 
\begin{subequations}
\label{as:projector}
    \begin{align}
        b(u-\Pi_h u,v_h) &\lesssim\tnorm{u-\Pi_h u}_{h,k,\#}\tnorm{v_h}_{h,k}, \quad \forall \, v_h \in U_h^{p,k} \label{eq:projector_property}
    \end{align}
where convergence rates result from the bound on $\tnorm{u-\Pi_h u}_{h,k,\#}$ in terms of $h,p$ with respect to the $\|u\|_q$ norm, with $q=\min(p+1,s)$.
\end{subequations}
    
\end{assmptn}

\begin{thrm}[AS-FEM a priori error estimate onto bubble enrichment]\label{thrm:a-priori-err-estim-min-res}
Let $u \in H^s(\Om), s>\frac{3}{2}$ solve~\eqref{eq:model_problem} and let $u_h\in U_h^{p}$ solve~\eqref{eq:residual}. Suppose that the discrete formulation~\eqref{eq:CIP_ discrete_bubbles} satisfies Lemma~\ref{lmm:Stability_bubbles} and Assumption~\ref{Assump1}, then the following estimate holds,
\begin{equation}\label{eq:a-priori-min-res}
	\tnorm{u - u_h}_{h,k} \lesssim \tnorm{u - \Pi_h u}_{h,k,\#},
\end{equation}
 where $~\Pi_h: U \mapsto U^{p}_h$ is defined as in Assumption~\ref{Assump1}. Therefore, 
 $\tnorm{u - u_h}_{h,k}$ can be bounded in terms of $h,p,$ and 
$\|u\|_q$ norm, with $q=\min\{p+1,s\}$.
\end{thrm}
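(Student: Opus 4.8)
The plan is to split the error with the triangle inequality, $\tnorm{u-u_h}_{h,k}\le \tnorm{u-\Pi_h u}_{h,k}+\tnorm{\Pi_h u-u_h}_{h,k}$, and then to control the fully discrete term $e_h:=\Pi_h u - u_h\in U_h^p\subset U_h^{p,k}$ by the interpolation error measured in the stronger $\#$-norm. The whole argument rests on two identities extracted from the saddle-point system~\eqref{eq:residual}. First, testing~\eqref{eq:residual-a} with $\nu_h\in U_h^{p,k}$ and using consistency (Lemma~\ref{lmm:concistencyBubbles}), which gives $l_h(\nu_h)=b_h(u,\nu_h)$, I obtain the Riesz identity $(\varepsilon_h,\nu_h)_{U_h^{p,k}}=b_h(u-u_h,\nu_h)$ for all $\nu_h\in U_h^{p,k}$. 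Second,~\eqref{eq:residual-b} furnishes the orthogonality $b_h(w_h,\varepsilon_h)=0$ for every $w_h\in U_h^p$. I use the convention that $(\cdot,\cdot)_{U_h^{p,k}}$ is precisely the inner product inducing $\tnorm{\cdot}_{h,k}$, so that $(\varepsilon_h,\varepsilon_h)_{U_h^{p,k}}=\tnorm{\varepsilon_h}^2_{h,k}$.

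The first step is to bound the residual representative $\varepsilon_h$. Choosing $\nu_h=\varepsilon_h$ in the Riesz identity gives $\tnorm{\varepsilon_h}^2_{h,k}=b_h(u-u_h,\varepsilon_h)$. I then insert $\Pi_h u$ and split $u-u_h=(u-\Pi_h u)+e_h$; because $e_h\in U_h^p$, the orthogonality $b_h(e_h,\varepsilon_h)=0$ annihilates the discrete contribution, leaving $\tnorm{\varepsilon_h}^2_{h,k}=b_h(u-\Pi_h u,\varepsilon_h)$. Applying the boundedness property of Assumption~\ref{Assump1} with $v_h=\varepsilon_h$ and cancelling one power of $\tnorm{\varepsilon_h}_{h,k}$ yields $\tnorm{\varepsilon_h}_{h,k}\lesssim \tnorm{u-\Pi_h u}_{h,k,\#}$.

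The second step is to bound $e_h$. By the discrete coercivity of Lemma~\ref{lmm:Stability_bubbles}, $\tnorm{e_h}^2_{h,k}\lesssim b_h(e_h,e_h)$. Writing $e_h=\Pi_h u - u_h$ and splitting again through $u$ gives $b_h(e_h,e_h)=-b_h(u-\Pi_h u,e_h)+b_h(u-u_h,e_h)$. The first term is controlled by Assumption~\ref{Assump1}; for the second I invoke the Riesz identity with $\nu_h=e_h$ to rewrite $b_h(u-u_h,e_h)=(\varepsilon_h,e_h)_{U_h^{p,k}}\le \tnorm{\varepsilon_h}_{h,k}\tnorm{e_h}_{h,k}$ by Cauchy--Schwarz. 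Cancelling $\tnorm{e_h}_{h,k}$ and inserting the bound on $\varepsilon_h$ from the first step produces $\tnorm{e_h}_{h,k}\lesssim \tnorm{u-\Pi_h u}_{h,k,\#}$. Combining this with the triangle inequality and the fact that $\tnorm{\cdot}_{h,k}\lesssim \tnorm{\cdot}_{h,k,\#}$ gives~\eqref{eq:a-priori-min-res}, and the stated convergence rate in terms of $h,p,\|u\|_q$ follows from the bound postulated on $\tnorm{u-\Pi_h u}_{h,k,\#}$ in Assumption~\ref{Assump1}.

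I expect the main subtlety --- not a lengthy computation but the step one must get exactly right --- to be the bookkeeping of arguments in the \emph{non-symmetric} form $b_h$: the orthogonality~\eqref{eq:residual-b} removes $e_h$ only when it occupies the first slot, so it applies in the estimate for $\varepsilon_h$ but \emph{not} directly in the estimate for $e_h$, where the discrete contribution must instead be re-expressed through the Riesz identity as the inner product $(\varepsilon_h,e_h)_{U_h^{p,k}}$. A secondary point to verify is that the boundedness of Assumption~\ref{Assump1} transfers to $b_h$ on the arguments used here; since $u\in H^s$ makes the interior-penalty jumps of $u$ vanish, the $\#$-norm must be chosen to absorb the residual $j_{h,k}(\Pi_h u,\cdot)$ contribution, which is exactly the purpose of the stronger norm $\tnorm{\cdot}_{h,k,\#}$.
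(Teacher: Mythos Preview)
Your proof is correct and follows essentially the same approach as the paper: triangle inequality, coercivity on $e_h=\Pi_h u-u_h$, the Riesz identity $(\varepsilon_h,\nu_h)_{U_h^{p,k}}=b_h(u-u_h,\nu_h)$ from~\eqref{eq:residual-a} combined with consistency, and the orthogonality~\eqref{eq:residual-b} to reduce $\tnorm{\varepsilon_h}_{h,k}^2$ to $b_h(u-\Pi_h u,\varepsilon_h)$, which is then bounded via Assumption~\ref{Assump1}. The only difference is organizational---you bound $\varepsilon_h$ first and then $e_h$, whereas the paper bounds $e_h$ first (leaving $\tnorm{\varepsilon_h}_{h,k}$ as an intermediate term) and then bounds $\varepsilon_h$; the ingredients and their roles are identical.
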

\begin{proof}
Using triangular inequality and the inf-sup condition that holds from the coercivity condition. 
\begin{align} 
    \tnorm{u - u_h}_{h,k} 
    & \leq \tnorm{u-\Pi_h u_h}_{h,k} + \tnorm{\Pi_h u-u_h}_{h,k}.\\
    & \leq \tnorm{u-\Pi_h u_h}_{h,k} + \sup_{v_h\in U_h^p\setminus\{0\}}\frac{b\left(u_h-\Pi_h u, v_h\right)}{\tnorm{v_h}_{h,k}}.\label{eq:ineq}
\end{align}
We now bound the last term of~\eqref{eq:ineq}. Let $\theta_h$ solve equation~\eqref{eq:CIP_ discrete_bubbles}, the consistency condition~\eqref{lmm:Stability_bubbles} establishes:
\begin{align}
l_h(v_h) =b_h(\theta_h, v_h)=b_h(u, v_h) \qquad \forall v_h\in U^{p,k}. \label{eq:condition} 
\end{align}
Since $u_h$ solves~\eqref{eq:residual},  the first equation in~\eqref{eq:residual} and~\eqref{eq:condition} imply 
\begin{align*}
    b(u_h-\Pi_hu,v_h) = b(u-\Pi_h u, v_h) - (\epsilon_h,v_h)_{h,k}.
\end{align*}
Then, 
\begin{align*}\sup_{v_h\in U_h^p\setminus\{0\}}\frac{b\left(u_h-\Pi_h u, v_h\right)}{\tnorm{v_h}_{h,k}} & \lesssim \sup_{v_h\in U_h^p\setminus\{0\}}\frac{b\left(u-\Pi_h u, v_h\right)}{\tnorm{v_h}_{h,k}} +  \sup_{v_h\in U_h^p\setminus\{0\}}\frac{b\left(\epsilon_h,v_h\right)}{\tnorm{v_h}_{h,k}}\\ & \lesssim
{\tnorm{u-\Pi_h u}_{h,k,\#}}+ \sup_{v_h\in U_h^p\setminus\{0\}}\frac{\left(\epsilon_h,v_h\right)_{h,k}}{\tnorm{v_h}_{h,k}} & 
\text{(using property~\eqref{eq:projector_property})} \\
& \lesssim {\tnorm{u-\Pi_h u}_{h,k,\#}} + {\tnorm{\epsilon_h}_{h,k}}
\end{align*}
Moreover, 
\begin{align*}
    \tnorm{\epsilon_h}_{h,k}^2& =l_h(\epsilon_h)-b_h(u_h,\epsilon_h) & \text{(using~\eqref{eq:residual-a} )}\\
    &= b_h(u,\epsilon_h) &  \text{(by~\eqref{eq:condition} and~\eqref{eq:residual-b} )} \\
    & = b_h(u-\Pi_h u ,\epsilon_h) & \text{( by~\eqref{eq:residual-b})}
\end{align*}
Thus, 
\begin{align*}
    \tnorm{\epsilon_h}_{h,k} = \frac{(\epsilon_h,\epsilon_h)_{h,k}}{\tnorm{\epsilon_h}_{h,k}}= \frac{b(u-\Pi_h u, \epsilon_h)}{\tnorm{\epsilon_h}_{h,k}}\lesssim \tnorm{u-\Pi_h u}_{h,k,\#}.
\end{align*}
\end{proof}

\noindent As~\cite{ calo2020ASFEM, rojas2021GoA} state, the residual representative $\varepsilon_h \in U^{p,k}_h$ is an efficient error estimate in the energy norm.
%





\begin{prpstn}\label{prop:bubble}
Consider $k=p+1>d=2$, with $d$ the dimension of the space, $u\in H^s(\Omega)$ solution of~\eqref{eq:model_problem}. 

Then, Assumption~\ref{Assump1} holds by taking the continuous $L^2(\Omega)$ projection $\Pi_h:U\to U^p_h$,
\begin{align*}
    b(u-u_h,v_h) &\lesssim \tnorm{u-\Pi_h u}_{h,k,\#}\|v_h\|_h, \quad \forall \, v_h \in U_h^{p,k}, \label{eq:projector_property}
\end{align*}
with norm:
$$\tnorm{u}_{ h,k, \#} := \tnorm{u}_{h, k} + \big(k^2 \, h^{\frac{1}{2}}p^{\frac{1}{2}} + k^{\frac{\alpha}{2}}p^{-\frac{1}{2}} \big)|u|_h .$$
and the semi-norm $|\cdot|_h,$
\begin{equation}
|\eta|_h :=\left(\sum_{T\in \Omega_h} h^{-1}_T\beta_{T,\infty} \|\eta\|^2_T\right)^{1/2} 
\label{eq:semi-norm}
\end{equation}
Taking $\alpha=7/2$,  we also obtain:
\begin{align*}
    \|u-\Pi_h u \|_{h,k}\lesssim
    ( p+p^{\frac{5}{2}}h^{\frac{1}{2}})\left(\frac{h}{p}\right)^{q-1/2}    \|u\|_{q,\Omega}
\end{align*}
with $q=\min\{p+1,s\}$. 
\end{prpstn}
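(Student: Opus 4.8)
The plan is to verify Assumption~\ref{Assump1} for the choice $\Pi_h=$ the $L^2(\Om)$-orthogonal projection onto $U_h^p$ in two stages: first the boundedness property~\eqref{eq:projector_property}, then the bound on $\tnorm{u-\Pi_h u}_{h,k,\#}$ in terms of $h,p$ and $\|u\|_{q,\Om}$ via Lemma~\ref{hp_error_estimate_L2}. Writing $\eta:=u-\Pi_h u$, I would expand
\[
b_h(\eta,v_h)=(\mu\eta,v_h)_\Om-(\eta,\Vb\cdot\nabla v_h)_\Om+(\Vb\cdot\Vn\,\eta,v_h)_{\partial\Om^+}+j_{h,k}(\eta,v_h).
\]
The reaction term (using $\mu\ge\mu_0$ and $\mu\in L^\infty$), the outflow boundary term, and the penalty term are each controlled by Cauchy--Schwarz and bounded by $\tnorm{\eta}_{h,k}\tnorm{v_h}_{h,k}$, since each is a genuine summand of the coercivity norm~\eqref{eq:coercive_norm} (for $j_{h,k}$ via the Cauchy--Schwarz inequality of the positive semi-definite penalty form). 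These three contribute only to the $\tnorm{\eta}_{h,k}$ part of the enhanced norm.

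The delicate term is the advective one, $-(\eta,\Vb\cdot\nabla v_h)_\Om$, which cannot be dominated by $\tnorm{v_h}_{h,k}$ directly because the coercivity norm carries no streamline-derivative control of the discrete function; this term alone forces the enhanced semi-norm $|\cdot|_h$. Here I would exploit that $\Pi_h$ is $L^2(\Om)$-orthogonal, so $(\eta,w_h)_\Om=0$ for all $w_h\in U_h^p$. Since $k=p+1$, the gradient $\nabla v_h|_T$ has degree $\le p$; approximating $\Vb\cdot\nabla v_h$ elementwise by a degree-$p$ field and subtracting its Oswald interpolant $I_{Os}(\cdot)\in U_h^p$ (which vanishes against $\eta$ by orthogonality), Lemma~\ref{hp_interp_error} bounds the remaining error by the edge jumps of $\Vb\cdot\nabla v_h$. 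Continuity of $v_h$, hence of its tangential derivative, and of $\Vb$ across $e$ reduces these to $\llbracket\Vb\cdot\nabla v_h\rrbracket\big|_e=(\Vb\cdot\Vn_e)\llbracket\nabla v_h\cdot\Vn_e\rrbracket$, exactly the quantity weighted by $j_{h,k}$. Rewriting the jumps through $\gamma_{h,k}^{-1/2}\sim k^{\alpha/2}/(h_e\,\|\Vb\cdot\Vn_e\|_{L^\infty(e)}^{1/2})$, together with the $(h_T/p)^{1/2}$ factor of Lemma~\ref{hp_interp_error} and a Cauchy--Schwarz over elements and edges, yields the bound $k^{\alpha/2}p^{-1/2}\,|\eta|_h\,\tnorm{v_h}_{h,k}$. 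The coefficient-variation remainder, estimated with $\|\Vb-\overline{\Vb}_T\|_{L^\infty(T)}\lesssim h_T$ and the standard $hp$ inverse inequality $\|\nabla v_h\|_T\lesssim (k^2/h_T)\|v_h\|_T$, contributes the complementary factor $k^2 h^{1/2}p^{1/2}\,|\eta|_h\,\tnorm{v_h}_{h,k}$. Adding the two reproduces precisely the coefficient $\big(k^2h^{1/2}p^{1/2}+k^{\alpha/2}p^{-1/2}\big)$ of $|\eta|_h$ in $\tnorm{\cdot}_{h,k,\#}$, establishing~\eqref{eq:projector_property}.

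For the rate, I would estimate each summand of $\tnorm{\eta}_{h,k}$ and the semi-norm $|\eta|_h$ through Lemma~\ref{hp_error_estimate_L2}. The volume term follows from~\eqref{eq:estimateL2}, $\|\eta\|_\Om\lesssim p^{1/4}(h/p)^q\|u\|_q$; the outflow and semi-norm terms from a multiplicative trace inequality fed by~\eqref{eq:estimateL2} and the gradient estimate~\eqref{eq:estimateL2grad}. The penalty term $j_{h,k}(\eta,\eta)^{1/2}$ is the delicate one: using $\llbracket\nabla u\cdot\Vn_e\rrbracket=0$ I would replace $\llbracket\nabla\eta\cdot\Vn_e\rrbracket$ by $-\llbracket\nabla\Pi_h u\cdot\Vn_e\rrbracket$, apply a trace inequality elementwise, and invoke $\|\nabla\eta\|_\Om\lesssim p^{5/4}(h/p)^{q-1}\|u\|_q$ from~\eqref{eq:estimateL2grad}; carrying the weight $\gamma_{h,k}^{1/2}\sim h\,k^{-\alpha/2}$ and setting $\alpha=7/2$, $k\sim p$ then fixes the powers of $k$. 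Summing the volume, boundary, penalty, and semi-norm contributions yields $(p+p^{5/2}h^{1/2})(h/p)^{q-1/2}\|u\|_{q,\Om}$ with $q=\min\{p+1,s\}$, the asserted estimate.

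I expect the advective term to be the crux. The rigorous treatment of a genuinely variable $\Vb$ (so that $\Vb\cdot\nabla v_h\notin V_h^p$) through the orthogonality--Oswald chain, and the bookkeeping that threads the interpolation factor of Lemma~\ref{hp_interp_error}, the penalty rescaling, and the inverse inequality so that the two coefficients $k^{\alpha/2}p^{-1/2}$ and $k^2 h^{1/2}p^{1/2}$ emerge with the stated exponents (and so that $\alpha=7/2$ indeed balances the $k$-powers in the final rate), is where the real work lies. Everything else is a careful but routine application of Lemma~\ref{hp_error_estimate_L2} and the trace inequalities of Lemma~\ref{lmm:hp-trace_ineq}.
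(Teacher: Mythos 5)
Your proposal follows essentially the same route as the paper's own proof (Appendix~\ref{Appendix}, split into Lemma~\ref{lmm:boundednessPp} and Theorem~\ref{thrm:Aposteriori}): isolate the advective term, split $\Vb$ into its piecewise-constant projection plus a remainder, use $L^2$-orthogonality of $\eta$ against the Oswald interpolant of $\Vb_h\cdot\nabla v_h$ (valid since $k=p+1$), convert the resulting edge jumps into the $j_{h,k}$ penalty plus an inverse-inequality term to produce the two coefficients $k^{\alpha/2}p^{-1/2}$ and $k^2h^{1/2}p^{1/2}$, and then derive the rates from Lemmas~\ref{lmm:hp-trace_ineq}--\ref{hp_error_estimate_L2} with $\alpha=7/2$. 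The only place you are looser than the paper is the penalty-term rate, where the paper routes $\llbracket\nabla\Pi_h u\cdot\Vn_e\rrbracket$ through the broken projection $\Pi_h^*$ and the Oswald operator before the discrete trace inequality can legitimately be applied, but this is a matter of detail rather than a different idea.
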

\begin{proof}
See~\ref{Appendix}.
\end{proof}

\begin{assmptn}[Saturation]\label{assmptn: saturation_min_res}
Let $u\in U$ be the solution of~\eqref{eq:model_problem} and $\theta_h \in U^{p,k}_h$ be the discrete solution of~\eqref{eq:CIP_ discrete_bubbles} and let $u_h \in U^p_h$ be the solution of the saddle-point problem~\eqref{eq:residual}. There exists a constant $C_{\text{s}} \in [0, 1)$, uniform with respect of the mesh size such that,
\begin{equation*}
    \tnorm{u - \theta_h}_{h,k} \leq C_{\text{s}} \tnorm{u - u_h}_{h,k}.
\end{equation*}
\end{assmptn}
This assumption states that the discrete solution $\theta_h$ is closer than $u_h$ to the exact solution $u$ with respect to the norm $\tnorm{\cdot}_{h,k}$. This is a relevant assumption as  $U^p_h \subset U^{p,k}_h$. However, this assumption does not necessarily hold in the pre-asymptotic regime (see~\cite{ calo2020ASFEM}) or if $~U^{p,k}_h$ is not rich enough. Additionally, we assume that the error estimate in~\eqref{eq:a-priori-min-res} is at least quasi-optimal in the following sense:
\begin{assmptn}[Optimality and quasi-optimality]\label{assmptn:optimality}
If the analytical solution $u$ is sufficiently regular, the quantities $\tnorm{u-u_h}_{h,k}$ and $\tnorm{u - \Pi^{p}_h u_h}_{h,k,\#}$ follow the same convergence rate as $h \rightarrow 0^+$. If the norms $\tnorm{\cdot}_{h,k}$ and $\tnorm{\cdot}_{h,k,\#}$ are equal, then the error estimate~\eqref{eq:a-priori-min-res} is optimal, otherwise, it is quasi-optimal.
\end{assmptn}
\noindent The robustness of the residual representative and the a posteriori error estimate is given by the following result:
\begin{prpstn}[Robustness of the residual representative and a posteriori error estimates]\label{prpstn:robustness_residual_repr}
Considering the same assumptions of Theorem~\ref{thrm:a-priori-err-estim-min-res} and the triple norm in $U^{p,k}_h$ defined in~\eqref{eq:coercive_norm}, the following bounds for $\tnorm{\varepsilon_h}_{h,k}$ hold:
\begin{equation}\label{eq:bounds_error_estimate}
    \tnorm{\theta_h - u_h}_{h,k} \lesssim \tnorm{\varepsilon_h}_{h,k} \lesssim \tnorm{u- \Pi_h^p u}_{h,k,\#},
\end{equation}
where, $\theta_h$ is the solution of the discrete problem~\eqref{eq:CIP_ discrete_bubbles}, $\Pi_h^p: L^2(\Om) \mapsto U^{p,k}_h$ is the $L^2(\Om)$-orthogonal projection onto $U^{p,k}_h$ and $u$ is the exact solution of the continuous problem~\eqref{eq:model_problem}. Moreover, if the solution satisfies Assumption~\ref{assmptn: saturation_min_res}, the following efficiency error estimate holds:
\begin{equation}\label{eq:a_posteriori_error_estimate}
    \tnorm{\varepsilon_h}_{h,k}  \lesssim \tnorm{u-u_h}_{h,k} .
\end{equation}
Additionally, if Assumption~\ref{assmptn:optimality} is satisfied, $\tnorm{\theta_h - u_h}_{h,k}$ and $\tnorm{\varepsilon_h}_{h,k}$ have the same convergence rate as $h \rightarrow 0^+$.
\end{prpstn}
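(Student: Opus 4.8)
The plan is to first isolate the algebraic identity that links the residual representative $\varepsilon_h$ to the pair $(\theta_h,u_h)$, and then to read off the two bounds in~\eqref{eq:bounds_error_estimate} from coercivity and Galerkin orthogonality, reusing the closing steps of Theorem~\ref{thrm:a-priori-err-estim-min-res}. Starting from the first saddle-point equation~\eqref{eq:residual-a} together with the consistency relation~\eqref{eq:condition}, which gives $l_h(\nu_h)=b_h(\theta_h,\nu_h)=b_h(u,\nu_h)$ for every $\nu_h\in U_h^{p,k}$, I obtain
\begin{equation*}
(\varepsilon_h,\nu_h)_{U_h^{p,k}}=l_h(\nu_h)-b_h(u_h,\nu_h)=b_h(\theta_h-u_h,\nu_h)=b_h(u-u_h,\nu_h),\qquad\forall\,\nu_h\in U_h^{p,k}.
\end{equation*}
Thus $\varepsilon_h$ is the Riesz representative of both $b_h(\theta_h-u_h,\cdot)$ and $b_h(u-u_h,\cdot)$; since $\theta_h-u_h\in U_h^{p,k}$ is itself admissible, this identity drives every subsequent estimate.

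For the left inequality I would test the identity with $\nu_h=\theta_h-u_h$ and use the discrete coercivity of Lemma~\ref{lmm:Stability_bubbles},
\begin{equation*}
\tnorm{\theta_h-u_h}_{h,k}^2\lesssim b_h(\theta_h-u_h,\theta_h-u_h)=(\varepsilon_h,\theta_h-u_h)_{U_h^{p,k}}\le\tnorm{\varepsilon_h}_{h,k}\,\tnorm{\theta_h-u_h}_{h,k},
\end{equation*}
so that $\tnorm{\theta_h-u_h}_{h,k}\lesssim\tnorm{\varepsilon_h}_{h,k}$ after cancelling one factor. For the right inequality I would test with $\nu_h=\varepsilon_h$, giving $\tnorm{\varepsilon_h}_{h,k}^2=b_h(u-u_h,\varepsilon_h)$, and then insert the projector: because $\Pi_h u-u_h\in U_h^p$, the second saddle-point equation~\eqref{eq:residual-b} yields $b_h(\Pi_h u-u_h,\varepsilon_h)=0$, whence $\tnorm{\varepsilon_h}_{h,k}^2=b_h(u-\Pi_h u,\varepsilon_h)$. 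Applying the boundedness property~\eqref{eq:projector_property} of Assumption~\ref{Assump1} with $v_h=\varepsilon_h$ and dividing by $\tnorm{\varepsilon_h}_{h,k}$ produces $\tnorm{\varepsilon_h}_{h,k}\lesssim\tnorm{u-\Pi_h u}_{h,k,\#}$, which is exactly the final maneuver of Theorem~\ref{thrm:a-priori-err-estim-min-res} and requires no new estimate.

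The a-posteriori bound~\eqref{eq:a_posteriori_error_estimate} and the equal-rate claim I would obtain from the saturation Assumption~\ref{assmptn: saturation_min_res}. Writing $\tnorm{u-\theta_h}_{h,k}\le C_s\tnorm{u-u_h}_{h,k}$ with $C_s\in[0,1)$ and using the triangle inequality, I get $(1-C_s)\tnorm{u-u_h}_{h,k}\le\tnorm{\theta_h-u_h}_{h,k}$ and $\tnorm{\theta_h-u_h}_{h,k}\le(1+C_s)\tnorm{u-u_h}_{h,k}$; combined with the left bound $\tnorm{\theta_h-u_h}_{h,k}\lesssim\tnorm{\varepsilon_h}_{h,k}$ these control the true error by the estimator and, in the reverse direction, reduce~\eqref{eq:a_posteriori_error_estimate} to an upper comparison of $\tnorm{\varepsilon_h}_{h,k}$ by $\tnorm{\theta_h-u_h}_{h,k}$. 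Finally, under Assumption~\ref{assmptn:optimality} the chain $(1-C_s)\tnorm{u-u_h}_{h,k}\le\tnorm{\theta_h-u_h}_{h,k}\lesssim\tnorm{\varepsilon_h}_{h,k}\lesssim\tnorm{u-\Pi_h u}_{h,k,\#}$ squeezes all four quantities between $\tnorm{u-u_h}_{h,k}$ and $\tnorm{u-\Pi_h u}_{h,k,\#}$, which share the convergence rate quantified in Proposition~\ref{prop:bubble}; hence $\tnorm{\theta_h-u_h}_{h,k}$ and $\tnorm{\varepsilon_h}_{h,k}$ converge at that common rate.

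The step I expect to be the main obstacle is precisely the upper comparison $\tnorm{\varepsilon_h}_{h,k}\lesssim\tnorm{\theta_h-u_h}_{h,k}$ underlying the efficiency direction of~\eqref{eq:a_posteriori_error_estimate}. By the Riesz identity $\varepsilon_h=R_h^{-1}B_h(\theta_h-u_h)$, this inequality is equivalent to a \emph{uniform} discrete boundedness of $b_h(\cdot,\cdot)$ on $U_h^{p,k}\times U_h^{p,k}$ measured in $\tnorm{\cdot}_{h,k}$; the advective term $\big(\,\cdot\,,\Vb\cdot\nabla\,\cdot\,\big)_\Om$ only admits an inverse estimate of order $p^2/h$ in that norm, so the naive constant is not mesh-independent. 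Circumventing this is exactly the purpose of the $\#$-norm and of the saturation hypothesis, and I would close the argument by showing that in the asymptotic regime the coercive part of $\tnorm{\cdot}_{h,k}$ dominates, so that $\tnorm{\varepsilon_h}_{h,k}$ and $\tnorm{\theta_h-u_h}_{h,k}$ become equivalent and~\eqref{eq:a_posteriori_error_estimate} follows; all remaining inequalities are routine once the Riesz identity is secured.
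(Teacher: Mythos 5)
The paper states Proposition~\ref{prpstn:robustness_residual_repr} without supplying a proof (it defers to the AS-FEM references), so the only material to compare against is the machinery already used in Theorem~\ref{thrm:a-priori-err-estim-min-res}. Your treatment of the two-sided bound~\eqref{eq:bounds_error_estimate} is correct and is essentially that machinery: the identity $(\varepsilon_h,\nu_h)_{U_h^{p,k}}=b_h(\theta_h-u_h,\nu_h)=b_h(u-u_h,\nu_h)$ from~\eqref{eq:residual-a} and consistency, coercivity (Lemma~\ref{lmm:Stability_bubbles}) tested with $\nu_h=\theta_h-u_h$ for the lower bound, and $\tnorm{\varepsilon_h}_{h,k}^2=b_h(u-\Pi_h u,\varepsilon_h)\lesssim\tnorm{u-\Pi_h u}_{h,k,\#}\,\tnorm{\varepsilon_h}_{h,k}$ for the upper bound, which is verbatim the closing step of the proof of Theorem~\ref{thrm:a-priori-err-estim-min-res}. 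One point worth making explicit: for~\eqref{eq:residual-b} to annihilate $b_h(\Pi_h u-u_h,\varepsilon_h)$ the projector must land in $U_h^p$, i.e.\ it must be the operator of Assumption~\ref{Assump1}; the proposition's description of $\Pi_h^p$ as a projection onto $U_h^{p,k}$ would break that cancellation, and your reading is the one that makes the argument work.

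The genuine gap is~\eqref{eq:a_posteriori_error_estimate}, which you diagnose accurately but do not close. Saturation plus the triangle inequality gives $(1-C_s)\tnorm{u-u_h}_{h,k}\le\tnorm{\theta_h-u_h}_{h,k}\lesssim\tnorm{\varepsilon_h}_{h,k}$, which is the \emph{reliability} direction, not the stated efficiency bound. To get $\tnorm{\varepsilon_h}_{h,k}\lesssim\tnorm{u-u_h}_{h,k}$ along your route you would need $\tnorm{\varepsilon_h}_{h,k}\lesssim\tnorm{\theta_h-u_h}_{h,k}$, i.e.\ a mesh-uniform discrete boundedness of $b_h$ in the plain triple norm, and as you observe the advective term $(\,\cdot\,,\Vb\cdot\nabla\,\cdot\,)_{\Om}$ only admits an inverse estimate costing $k^2/h_T$, so the constant is not uniform. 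Your proposed repair --- that ``in the asymptotic regime the coercive part dominates'' --- is an assertion, not an argument; nothing in the paper supports a uniform equivalence of $\tnorm{\varepsilon_h}_{h,k}$ and $\tnorm{\theta_h-u_h}_{h,k}$. With the tools actually available, the rigorous statements are $\tnorm{\varepsilon_h}_{h,k}\lesssim\tnorm{u-\Pi_h u}_{h,k,\#}$ (already proved) and the rate comparison supplied by Assumption~\ref{assmptn:optimality}; converting these into~\eqref{eq:a_posteriori_error_estimate} in the unhashed norm requires invoking that quasi-optimality assumption (or an additional boundedness hypothesis in the $\tnorm{\cdot}_{h,k}$ norm), not saturation alone. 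You should state this dependence explicitly rather than promising an asymptotic-dominance argument that is not supplied.
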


\section{Numerical experiments.}\label{sec:num}
In this section, we consider several numerical experiments in the context of the advection-reaction equation to demonstrate the performance of the residual minimization method considering bubble-enriched test spaces. The method can be extended to advection-diffusion-reaction problems, modifying the formulation and the discrete norm~\eqref{eq:coercive_norm} to the one defined in~\cite{ burmanErn2007hpCIP}.
In our first numerical example, we compare the performance of the residual-based error estimate to guide adaptivity against the a posteriori estimate proposed in~\cite{ burman2009errorCIP} considering the same initial problem setting and element marking strategy. As a second numerical example, we compare the results for the goal-oriented adaptivity (GoA) strategy proposed in~\cite{ rojas2021GoA} using a dG framework against those obtained using the CIP formulation and residual minimization using a bubble-enriched test space. Although in~\cite{ rojas2021GoA}, the results were obtained using a different formulation, the comparison using continuous test spaces is still fair since we calculate the relative error of a quantity of interest (QoI).

In all our numerical experiments, we use a standard adaptive procedure that considers an iterative process in which each iteration consists of the following four steps:
\begin{equation}\label{eq:adaptive_strategy}
    \textrm{SOLVE} \rightarrow \textrm{ESTIMATE} \rightarrow \textrm{MARK} \rightarrow \textrm{REFINE}.
\end{equation}
In addition, we adopt the Döfler bulk-chasing criterion (see~\cite{ dorfler1996}), where elements are marked when the local error estimate value is above a fraction of the total estimated error. Following~\cite{ calo2020ASFEM, rojas2021GoA}, we adopt 50 \%  error fraction for the energy-based adaptivity and a 20 \% error fraction for the GoA strategy. We also employ a bisection-type refinement criterion~\cite{ bank1983refinementAlgo} for the adaptive solver. Finally, we use Fenics~\cite{ fenics2015} Python library to implement each of the numerical examples, and we use a direct LU solver for the resulting linear systems.
\subsection{Advection-reaction problem.}
We consider the advection-reaction problem~\eqref{eq:model_problem} over the unit square domain $\Om = [0,1]^2 \subset \mathbb{R}^2$. Following~\cite{ burman2009errorCIP}, we take the reaction parameter $\mu=0.1$, the velocity field
\begin{equation*}
	\Vb(x_1,\, x_2) = \left( \dfrac{x_2+1}{\sqrt{x^2_1 + (x_2+1)^2}}, \dfrac{-x_1}{\sqrt{x^2_1 + (x_2+1)^2}} \right)^T
\end{equation*}
and $g(x_1, x_2)$ so that the exact solution reads as,
\begin{equation*}
    u(x_1, x_2) = e^{\mu \sqrt{x^2_1+(x_2+1)^2} \arcsin\left(\frac{x_2+1}{\sqrt{x^2_1+(x_2+1)^2} }
    \right)} \arctan\left(\dfrac{\sqrt{x^2_1+(x_2+1)^2} - 1.5}{\delta}\right)
\end{equation*}
where $\delta$ is a parameter that controls the stiffness of the internal boundary layer (see Figure~\ref{fig1:advection-advection problem}).
In this example, we set $\delta = 0.01$ to obtain a smooth solution to assess the expected convergence rates. In addition, we apply the adaptive strategy from~\eqref{eq:adaptive_strategy} to solve this problem using the a-posteriori residual estimate of~\cite{ burman2009errorCIP} and the residual minimization error estimate to guide the adaptivity from the same initial mesh (see Figure~\ref{fig_3a:initial_mesh}).
\begin{figure}[H] 
    \centering
    \begin{subfigure}[b]{0.3\textwidth}
        \centering
        \includegraphics[width=\textwidth]{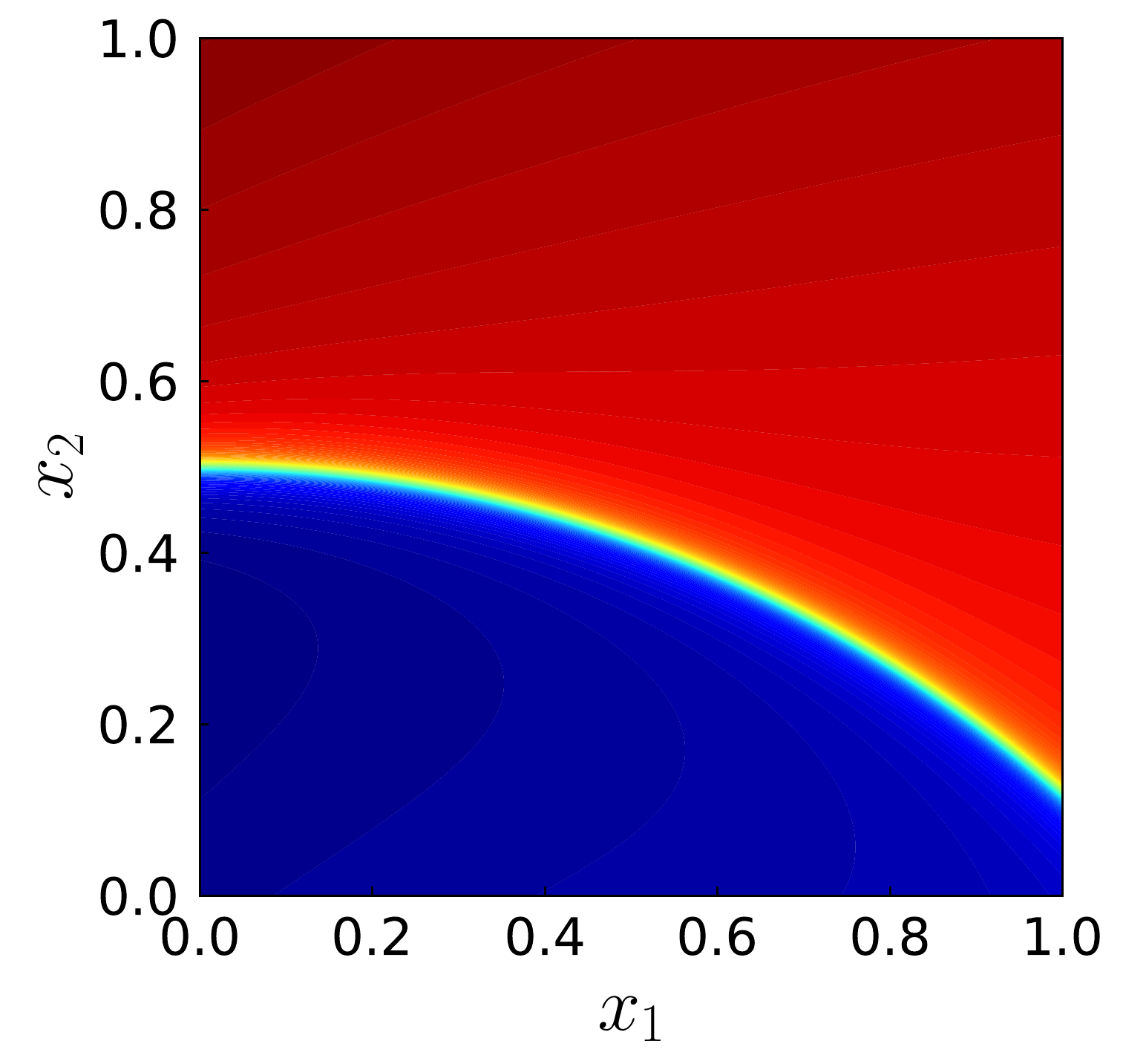}
        \caption{$\delta=0.01$}
    \end{subfigure}
    \hspace{0.2cm}
    \begin{subfigure}[b]{0.3\textwidth} 	 
        \centering
        \includegraphics[width=\textwidth]{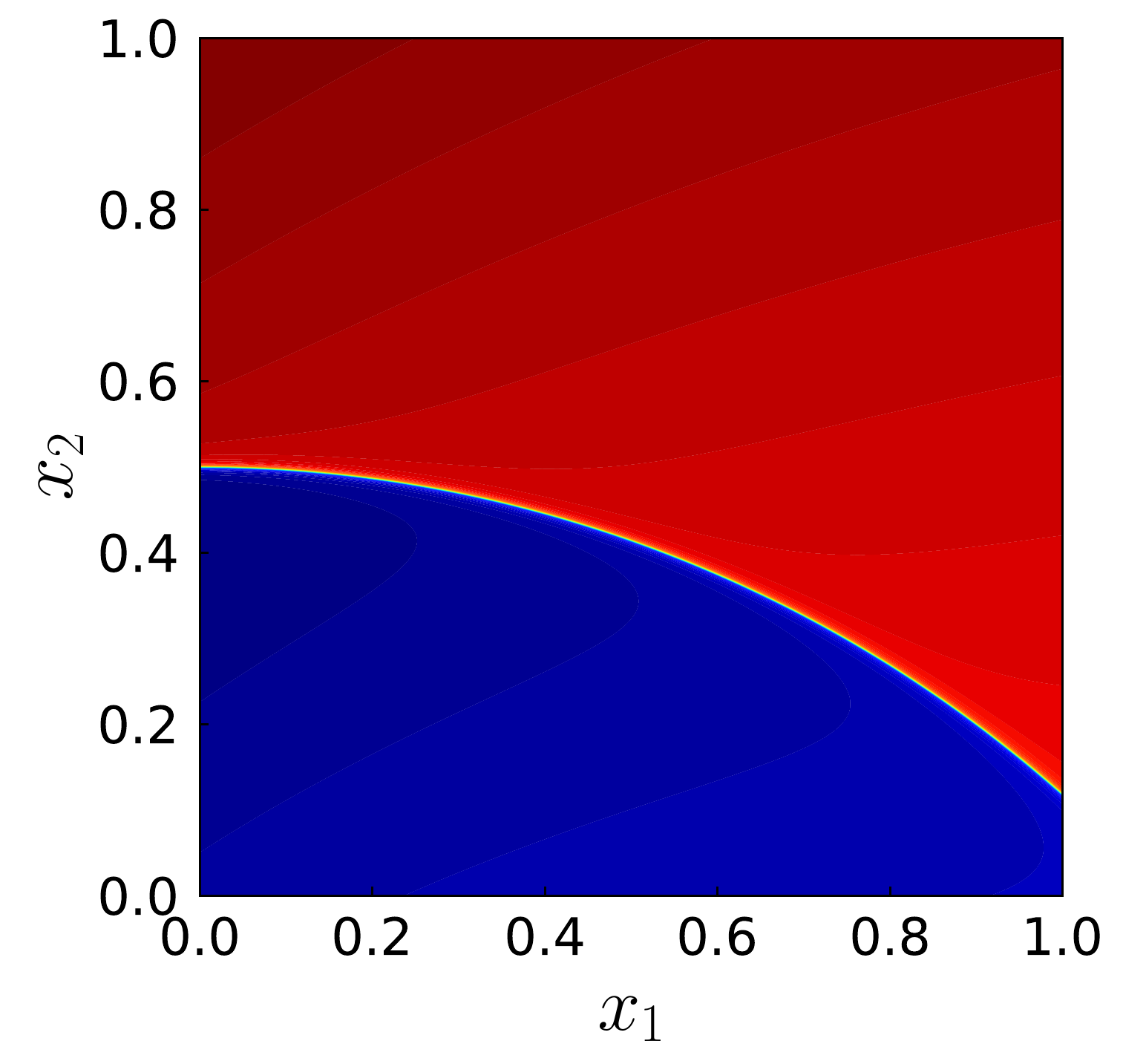}
        \caption{$\delta=0.001$}
    \end{subfigure}
    \hspace{0.2cm}
    \begin{subfigure}[b]{0.3\textwidth} 	 
        \centering
        \includegraphics[width=\textwidth]{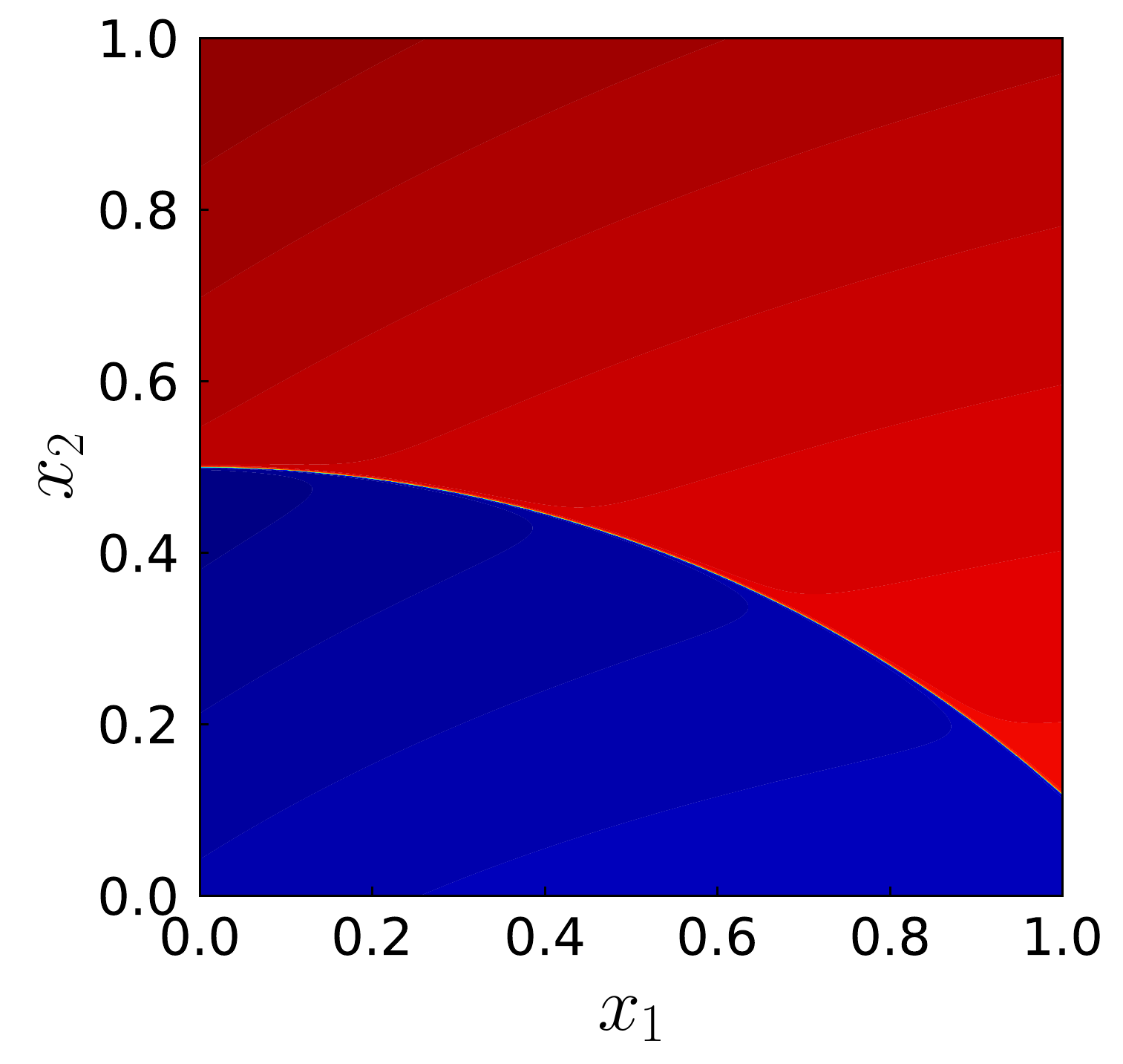}
        \caption{$\delta=0.0001$}
    \end{subfigure}
    \caption{2D advection-reaction problem (exact solution)}
    \label{fig1:advection-advection problem}
\end{figure}
Figures~\ref{fig_3b:refined_mesh_min_res} and~\ref{fig_3c:refined_mesh_burman} compare the refined meshes resulting from residual-based and a-posteriori estimates at similar numbers of degrees of freedom $(\textrm{DoFs})$. The a-posteriori estimate focuses the mesh refinement at the outflow boundary and the interior layer, with less emphasis at the inflow boundary. The residual-based estimator concentrates the refinement at the inflow boundary and progressively solves the interior layer.
\begin{figure}[H] 
    \centering
    \begin{subfigure}[b]{0.3\textwidth}
        \centering
        \includegraphics[width=\textwidth]{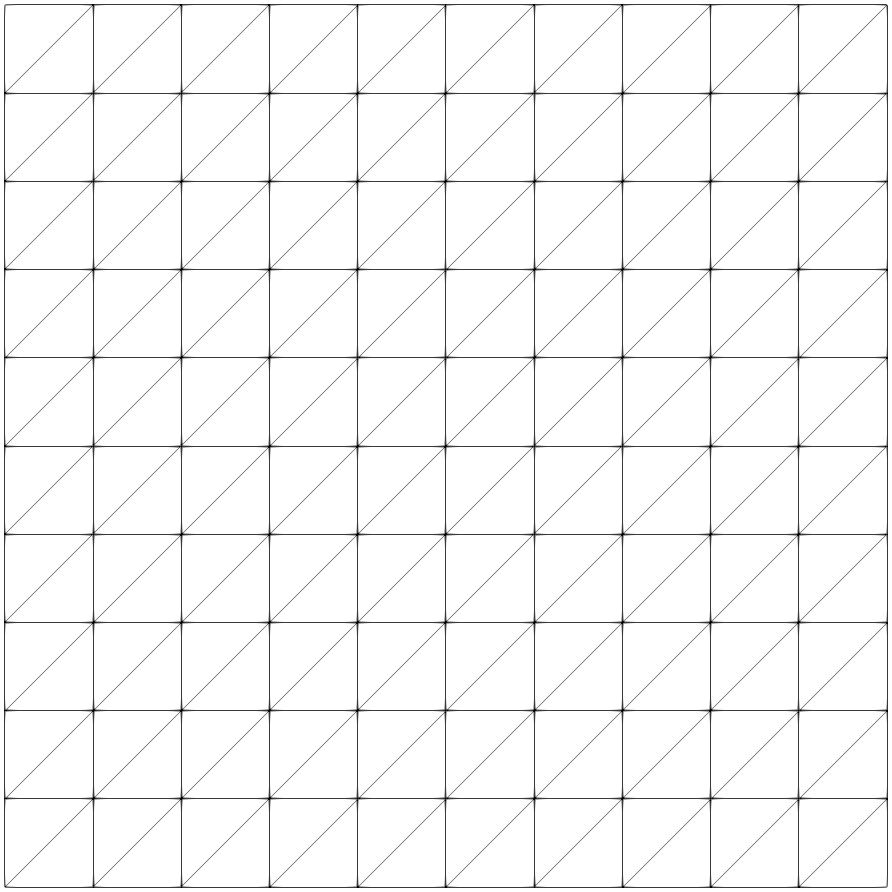}
        \caption{Initial mesh}
    \label{fig_3a:initial_mesh}
    \end{subfigure}
    \hspace{0.2cm}
    \begin{subfigure}[b]{0.3\textwidth} 	 
        \centering
        \includegraphics[width=\textwidth]{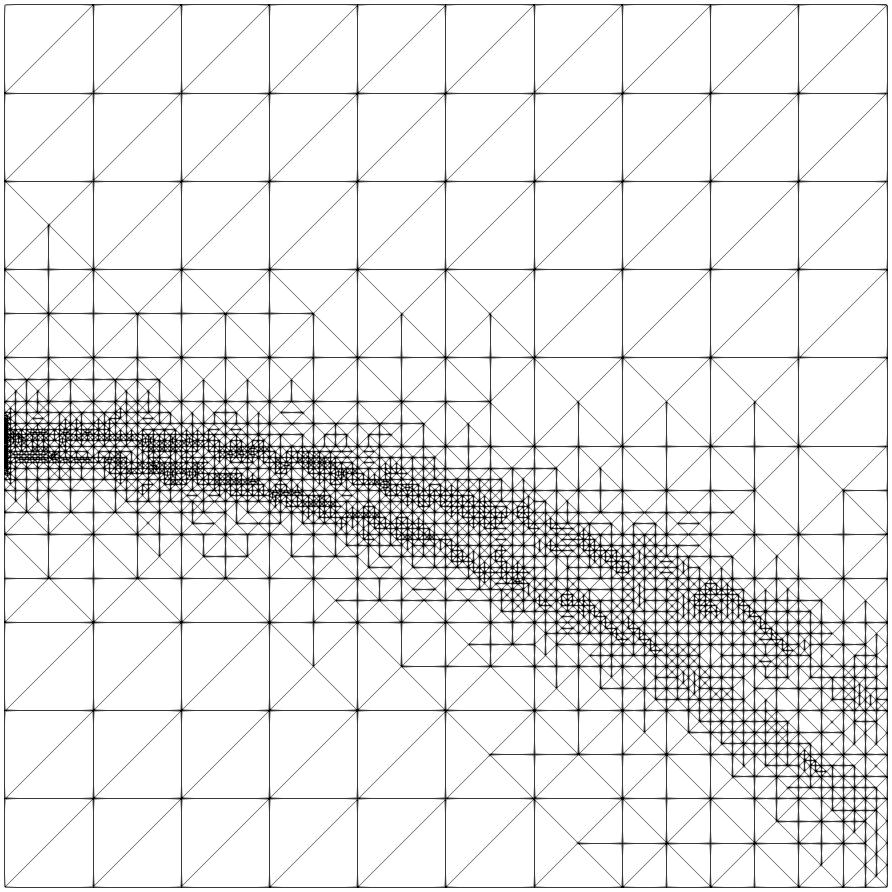}
        \caption{Residual-based estimator (143,270 $\textrm{DoF}$)}
        \label{fig_3b:refined_mesh_min_res}
    \end{subfigure}
    \hspace{0.2cm}
    \begin{subfigure}[b]{0.3\textwidth} 	 
        \centering
        \includegraphics[width=\textwidth]{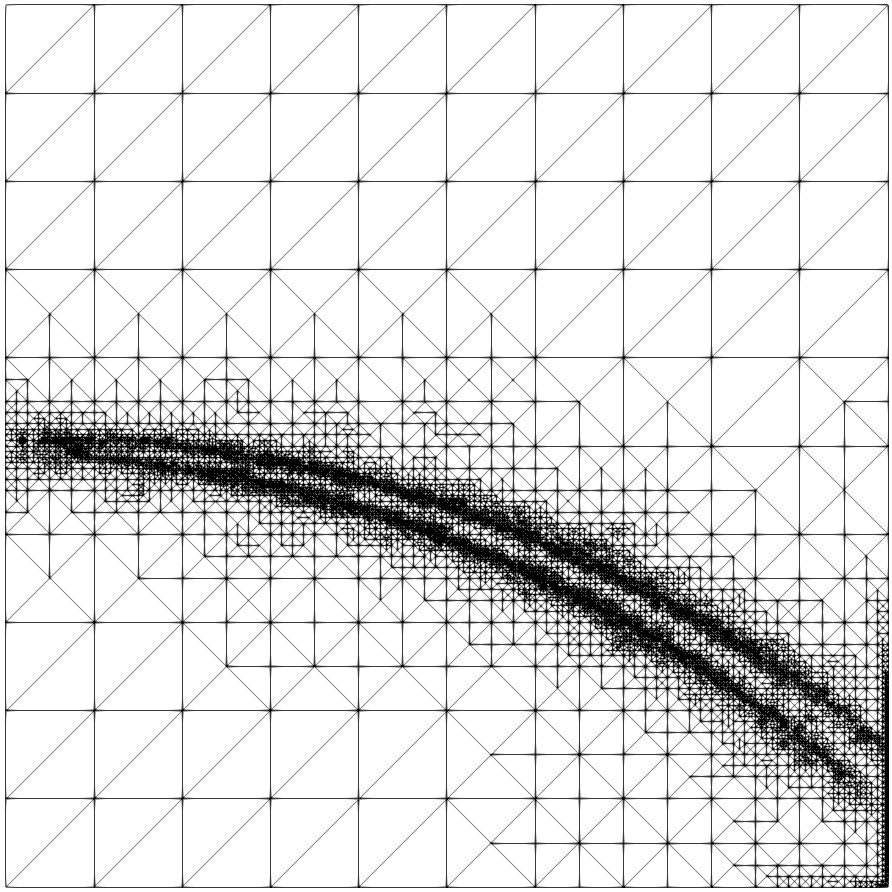}
        \caption{A-posteriori estimator (145,720 $\textrm{DoF}$)}
        \label{fig_3c:refined_mesh_burman}
    \end{subfigure}
    \caption{Initial \& refined meshes for similar total degrees-of-freedom (DoF) number using residual-based \& a-posterior estimator from~\cite{ burman2009errorCIP}}
    \label{fig3:refined_meshes_num_ex_1}
\end{figure}
Figure~\ref{fig2:comparison_min_res_burman} shows the comparison of the $L^2$-relative error between the a posteriori error estimate proposed in~\cite{ burman2009errorCIP} (red curve) and the residual-based estimate (blue curve). For the residual minimization scheme, we choose a piecewise continuous trial space of order $p=1$  and its enrichment with bubbles of order $k=p+2$ as a test space. As for the a-posteriori error estimation strategy, we use conforming piecewise continuous trial and test space of order $p=1$~\cite{ burman2009errorCIP}. We plot all quantities against the total number of degrees of freedom ($\textrm{DoFs}$), and the triangle shows the slope $\textrm{DoFs}^{-1}$, which is the optimal convergence in the $L^2$-norm for smooth problems~\cite{ burman2009errorCIP}. The convergence rate of the CIP formulation that uses the residual minimization strategy over a bubble-enriched test space is slightly better than the one obtained from applying the CIP method with the a-posteriori error estimation strategy. In addition, note, in particular, the superconvergence of the $L^2$-error using CIP and residual minimization method when the mesh enters the asymptotic range. This is meaningful since the residual error estimate is defined in the energy norm. Although the convergence rate is improved in the $L^2$-norm the total number of $\textrm{DoFs}$ using residual minimization is greater than the $\textrm{DoFs}$ using the strategy in~\cite{ burman2009errorCIP}. This fact is because residual minimization solves a full saddle-point system that gives a discrete solution and a residual estimate in the bubble-enriched test space.
\begin{figure}[H] 
    \centering
    \includegraphics[width=0.5\textwidth]{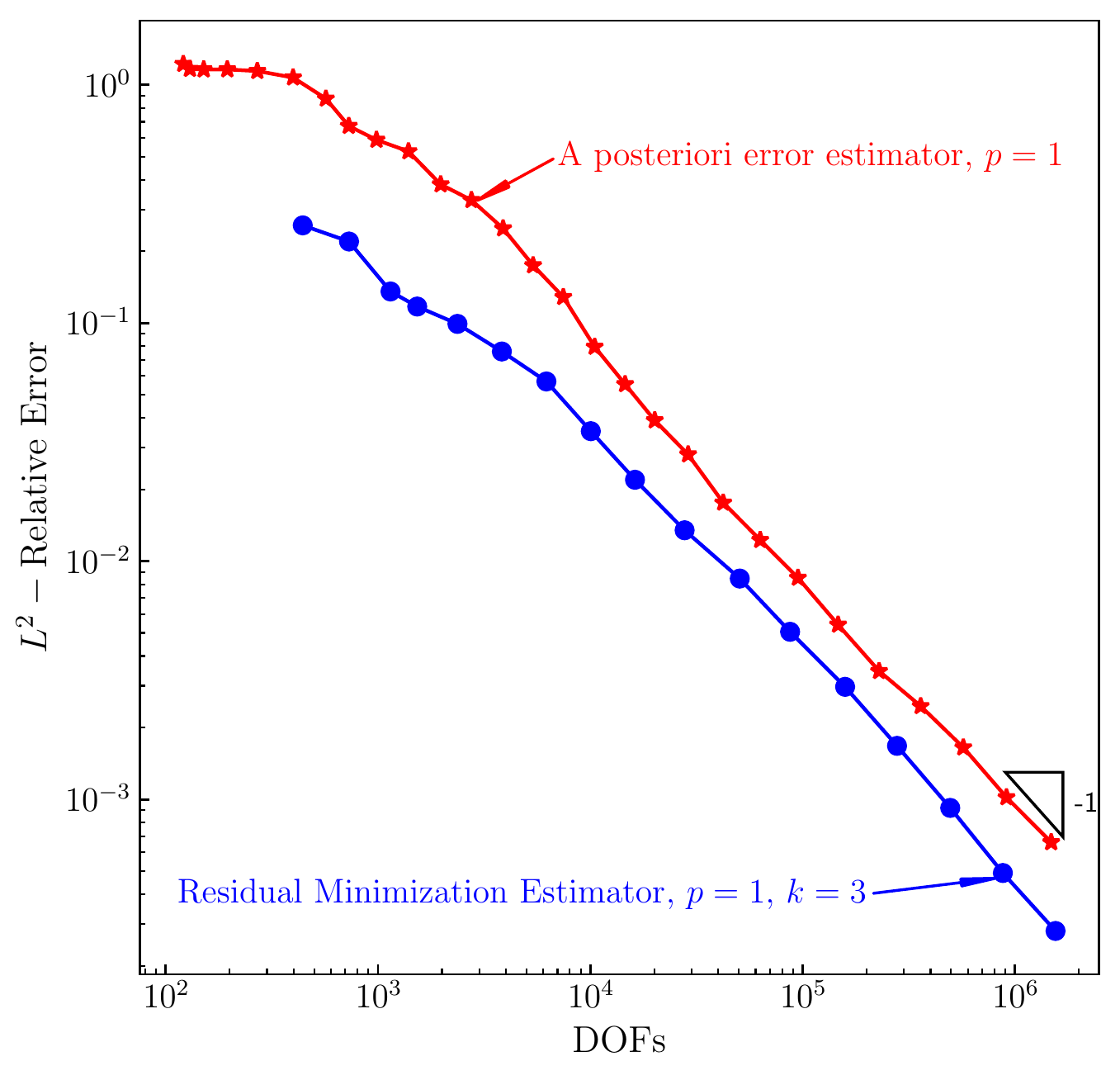}
    \caption{$L^2$-relative error $\left(\frac{\|u-u_h\|_{\Om}}{\|u\|_{\Om}}\right)$.  A-posteriori estimator of~\cite{ burman2009errorCIP} and residual-based estimate.}
    \label{fig2:comparison_min_res_burman}
\end{figure}
Next, we study the validity of the saturation assumption (see Assumption~\ref{assmptn: saturation_min_res}) by considering the triple norm $\tnorm{\cdot}_{h,k}$ from~\eqref{eq:coercive_norm} and introduce the following ratio,
\begin{equation}\label{eq:saturation_ratio}
    \mathcal{S} := \dfrac{\tnorm{u-\theta_h}_{h, k}}{\tnorm{u-u_h}_{h,k}}.
\end{equation}
When $\mathcal{S}<1$, the approximation satisfies the saturation assumption~\ref{assmptn: saturation_min_res}. Figure~\ref{fig4:robustness_saturation} shows the residual representative's robustness and saturation assumption (i.e., Assumption~\ref{assmptn: saturation_min_res}) using piecewise continuous trial space of order $p=1$ and $p=2$ and test space as its enrichment with bubbles of order $k=p+2$. Figure~\ref{fig_4a:robustness} shows that $\tnorm{\varepsilon_h}_{h,k}$ is robust as it is a lower bound of $\tnorm{u - u_h}_{h,k}$ (see Proposition~\ref{prpstn:robustness_residual_repr}). In addition, the saturation Assumption~\ref{assmptn: saturation_min_res} also holds even in the pre-asymptotic regime (see Figure~\ref{fig_4b:saturation}).
\begin{figure}[H] 
    \centering
    \begin{subfigure}[b]{0.44\textwidth}
        \centering
        \includegraphics[width=\textwidth]{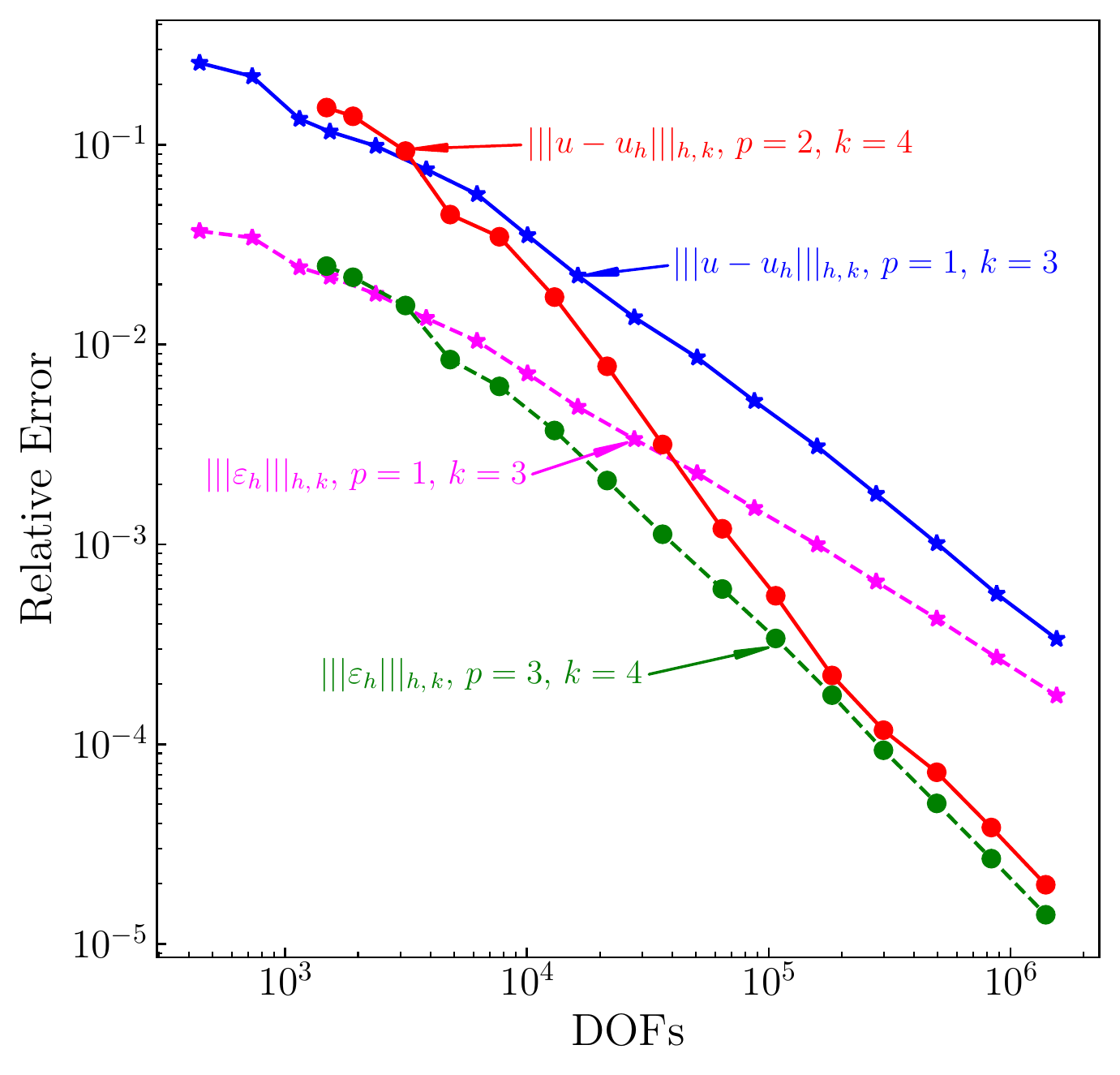}
        \caption{Residual estimator robustness}
        \label{fig_4a:robustness}
    \end{subfigure}
    \hspace{0.2cm}
    \begin{subfigure}[b]{0.45\textwidth} 	 
        \centering
        \includegraphics[width=\textwidth]{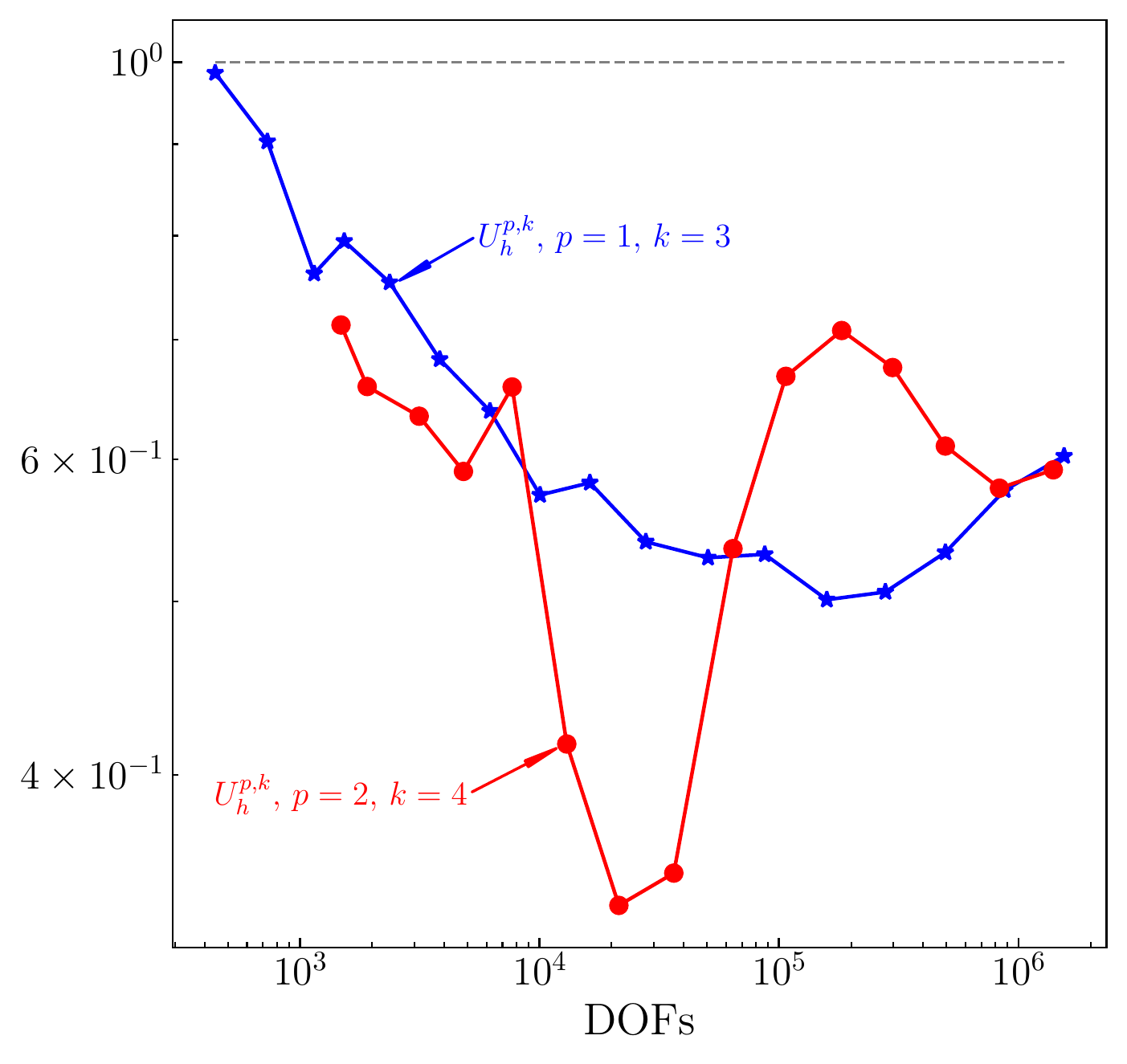}
        \caption{Saturation}
        \label{fig_4b:saturation}
    \end{subfigure}
    \caption{Relative error in the triple norm $\tnorm{\cdot}_{h,k}$ \& saturation assumption.}
    \label{fig4:robustness_saturation}
\end{figure}

\subsection{Goal-oriented adaptivity for advection-reaction problems}

Following~\cite{ rojas2021GoA}, we perform Goal-Oriented Adaptivity (GoA). The key insight behind this theory is to consider an adjoint continuous formulation and an adjoint problem to the saddle-point formulation~\eqref{eq:residual}. Thus, we approximate a quantity of interest (QoI) $q(u)$, where $q: U \mapsto \R$ is a bounded linear functional and $u \in U$ is the exact solution of the continuous problem~\eqref{eq:model_problem}.
The GoA strategy of~\cite{ rojas2021GoA} solves an additional continuous adjoint problem, which is equivalent to solving the following saddle-point problem as the adjoint formulation of problem~\eqref{eq:residual}:
\begin{equation}\label{eq:mixed_adj}
  \left\{ 
    \begin{array}{l}
      \text{Find } (\nu^*_h, w^*_h) \in U^{p,k}_h \times U^p_h, \text{ such that:}\\ 
      \begin{array}{lll}
        (\nu^*_h, \, \nu_h)_{U^{p,k}_h} + b_h(w^*_h, \, \nu_h) \hspace{-0.2cm} & = \ 0,   &\quad \forall\, \nu_h \in U^{p,k}_h, \\
        b_h(w_h, \, \nu^*_h) & = \  q(w_h),  &\quad \forall\, w_h \in U^p_h,
      \end{array}
    \end{array}
  \right.
\end{equation}
where $\nu^*_h \in U^{p,k}_h$ approximates the discrete solution of the adjoint continuous problem (see~\cite{ rojas2021GoA}); the adjoint counterpart of the solution $u_h \in U^p_h$ of~\eqref{eq:residual} while $w^*_h \in U^p_h$ is an additional variable that constrains the solution dimension. Moreover, as the direct saddle-point problem~\eqref{eq:residual} is well-posed, the adjoint saddle-point problem~\eqref{eq:mixed_adj} is also well-posed; the two problems share the same left-hand side. Thus, we approximate the error in the QoI following~\cite{ rojas2021GoA}, which solves an auxiliary discrete problem that has a unique solution and is well-posed,
\begin{equation}\label{eq:res_representation_problem}
\left\{\begin{array}{l}
	\text{Find } \varepsilon^*_h \in U^{p,k}_h \,  \text{ such that:}\\
	 (\varepsilon^*_h, \nu_h) = q(\nu_h) - b_h(\nu_h, \nu^*_h), \quad \forall \, \nu_h \in U^{p,k}_h
\end{array}
\right.
\end{equation}
We adopt the following adjoint-residual-based estimator for the QoI~\cite{ rojas2021GoA},
\begin{equation}\label{eq:adjoint_based_estimator}
    \mathbb{E}^2_T(\varepsilon_h, \varepsilon^*_h):= \big|\big(\varepsilon_h, \varepsilon^*_h \big)_{ U^{p,k}_h} \big|
\end{equation}
which solves~\eqref{eq:residual} and~\eqref{eq:mixed_adj} along with the adjoint residual problem~\eqref{eq:res_representation_problem}. Then, we mark each element with its local upper bound $\tnorm{\varepsilon_h}_{T_{h,k}}\tnorm{\varepsilon^*_h}_{T_{h,k}}$ where $\tnorm{\cdot}_{T_{h,k}}$ is the element localized triple norm. \citet{ rojas2021GoA} built the theory using a dG framework; nevertheless, it is general and can be applied to the bubble-enriched continuous test spaces as well. In our numerical example, we consider the advection-reaction problem~\eqref{eq:model_problem} over the unit square domain $\Om = [0,\,1]^2\in \mathbb{R}^2$, with a constant velocity field $b = (3, 1)^T$ (cf.~\cite{ rojas2021GoA}). Let the reaction parameter be $\mu=0$ (advection dominated), thus the  source term is $f=\mu\,u=0$ in $\Om$. In this case, $g(x_1, x_2)$ defines the exact solution as,
\begin{equation}\label{eq:exact_solution_goa}
    u(x_1, x_2) = 2 + \tanh\left(10\left(x_2 - \dfrac{x_1}{3}-\dfrac{1}{4}\right)\right) + \tanh\left(1000\left(x_2-\dfrac{x_1}{3}-\dfrac{3}{4}\right)\right).
\end{equation}
\begin{figure}[H] 
    \centering
    \begin{subfigure}[b]{0.3\textwidth}
        \centering
        \includegraphics[width=\textwidth]{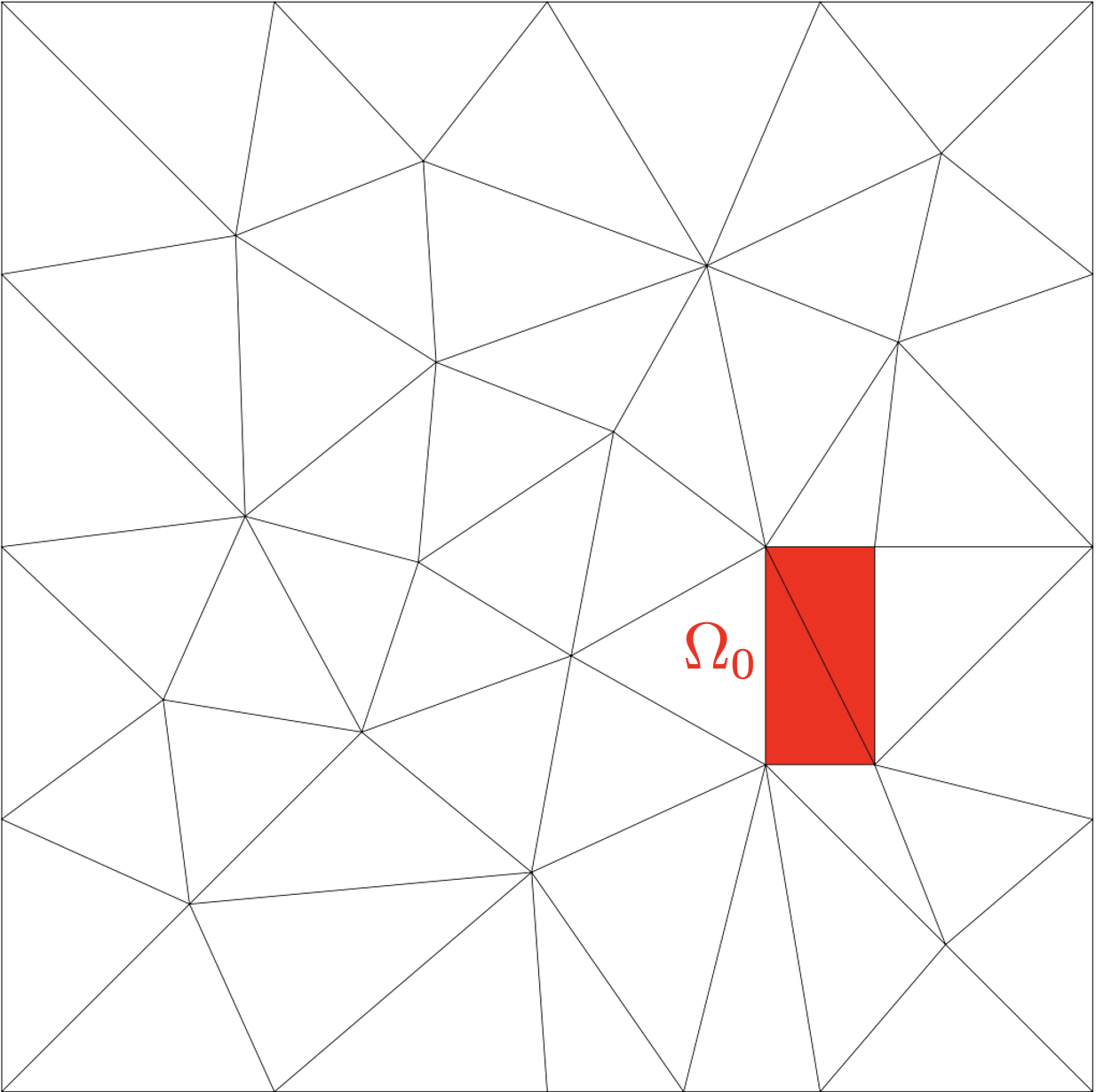}
        \caption{Initial mesh}
        \label{fig_6a:initial_mesh}
    \end{subfigure}
    \hspace{0.2cm}
    \begin{subfigure}[b]{0.3\textwidth} 	 
        \centering
        \includegraphics[width=\textwidth]{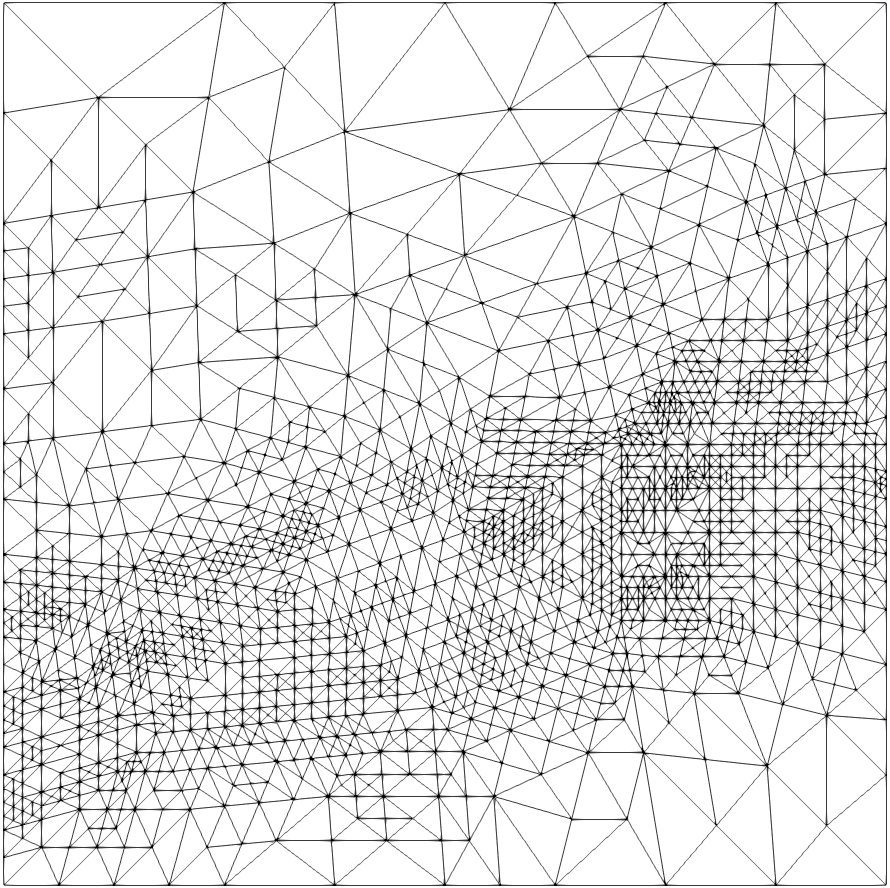}
        \caption{Bubble-enriched CIP (110,990 $\textrm{DoF}$)}
        \label{fig_6b:refined_mesh_CIP}
    \end{subfigure}
    \hspace{0.2cm}
    \begin{subfigure}[b]{0.3\textwidth} 	 
        \centering
        \includegraphics[width=\textwidth]{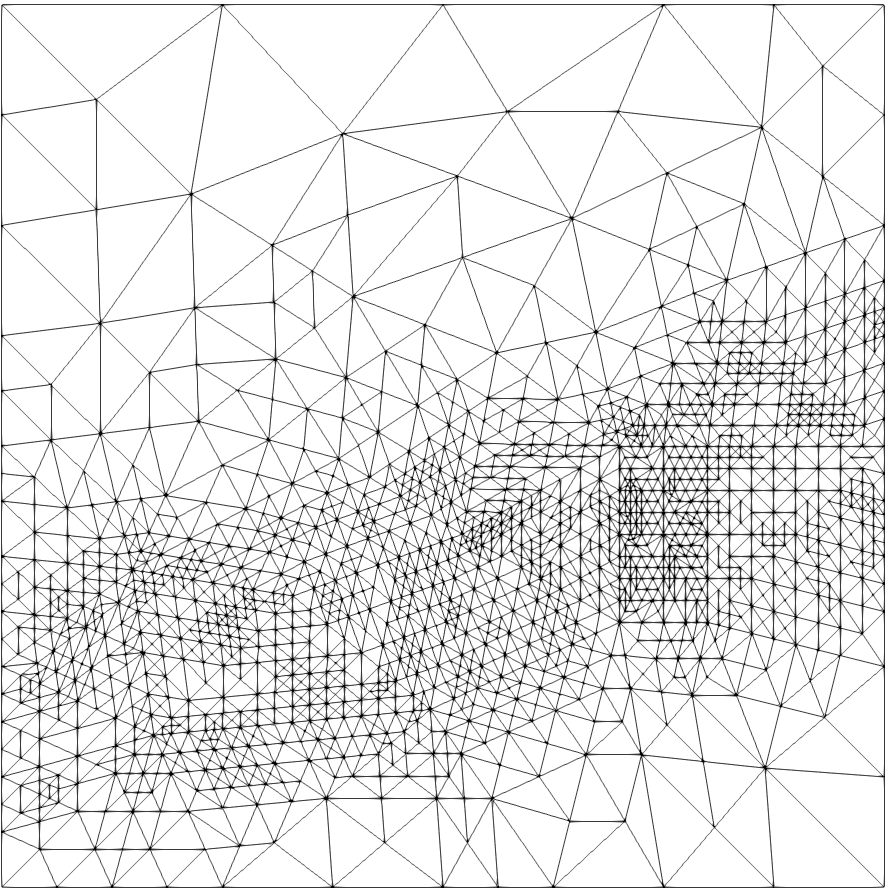}
        \caption{dG framework (107,040 $\textrm{DoF}$)}
        \label{fig_6c:refined_mesh_dg}
    \end{subfigure}
    \caption{Initial \& refined meshes for similar total degrees-of-freedom $\textrm{DoF}$ number using CIP formulation with a bubble enriched test space $(p=1,\, k=3)$ \& a dG framework $(p=1,\, \Delta p=0)$}
    \label{fig6:refined_meshes_num_ex_2}
\end{figure}
\begin{figure}[H] 
    \centering
    \includegraphics[width=0.5\textwidth]{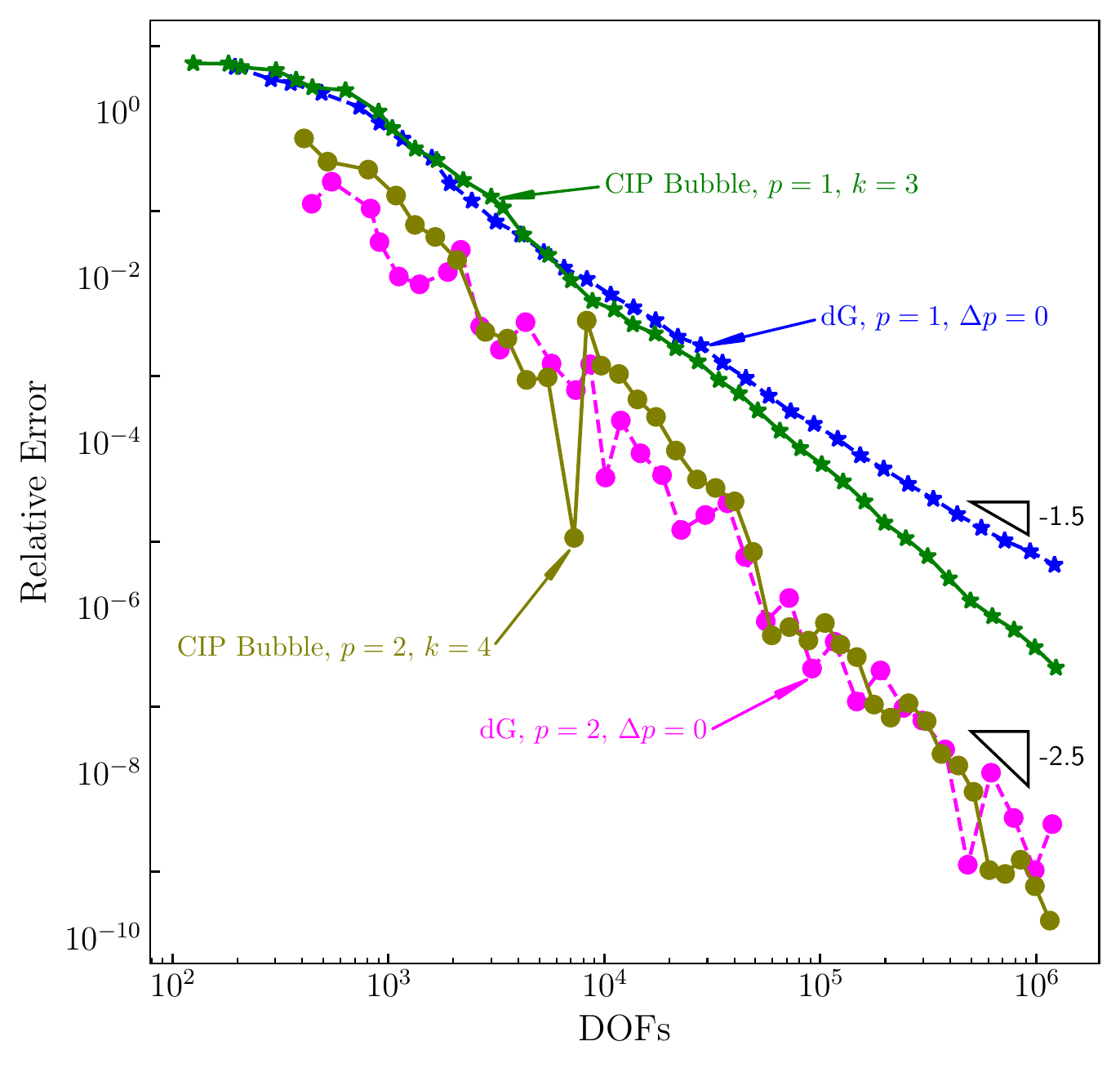}
    \caption{Relative error $(|q(u-u_h)|/|q(u)|$ in the quantity of interest (QoI) using residual minimization with a CIP formulation with bubble-enriched test space (solid lines) and a dG framework (dashed lines)}
    \label{fig5:comparison_goa_CIP_dG}
\end{figure}
In addition, the QoI has the following form,
\begin{equation}
    q(u) = \dfrac{1}{|\Om_0|} \int_{\Om_0} u d\Om_0,
\end{equation}
where $u$ is the exact solution~\eqref{eq:exact_solution_goa} and $\Om_0 = (0.7, 0.8) \times (0.3, 0.5)$ is a subdomain of the physical domain $\Om$. As the starting point for the adaptive procedure, we consider the $\Om_0$-conforming mesh of Figure~\ref{fig_6a:initial_mesh}. 
Figure~\ref{fig6:refined_meshes_num_ex_2} displays the resulting adapted meshes using a CIP formulation with bubble-enriched test space and a dG framework at a similar total number of $\textrm{DoFs}$. These figures show that the resulting adapted meshes are similar, and both methodologies adjust the refinement process consistently to the physical problem nature.
Next, we compare error plots against the expected optimal convergence slope $\textrm{DoF}^{-\left(p+\frac{1}{2}\right)}$ where $p$ is the polynomial order of the trial space (see~\cite{ feischl2016abstractAnalGoA}). 
Since the error is measured in a QoI, we compare the results obtained in~\cite{ rojas2021GoA} using a dG framework against CIP method with residual minimization onto a bubble-enriched test space results directly. We evaluate the numerical results considering two polynomial orders for the trial space (namely $p=1,\,2$) with the same polynomial order for the dG test space (i.e., $\Delta p = 0$) and $k=p+2$ for bubble enriched test space. Figure~\ref{fig5:comparison_goa_CIP_dG} shows the convergence of the relative error in the QoI for the CIP formulation with residual minimization onto bubble-enriched test spaces (solid lines) and the dG framework proposed in~\cite{ rojas2021GoA} (dashed lines). For low order approximations, the CIP formulation with residual minimization shows super convergence compared to the residual minimization using a dG test space that achieves the optimal convergence rate. For higher-order polynomial spaces, CIP has a performance similar to the dG framework.

\section{Contributions and future work.}\label{sec:conclusions}
This paper extends AS-FEM to use the $hp$-CIP finite element method with an enriched test space. We use a stable formulation (namely, the CIP formulation) in the trial space and enlarge the test space via bubble enrichment of the trial space to estimate a residual representative to guide adaptivity, see~\cite{ Labanda2022}. This is meaningful as, in this case, the formulation is already stable in the trial space. Since we choose the coercive stable CIP formulation, we derive a new a-priori error estimate result for the residual minimization method using continuous bubble-enriched test spaces, which relies on an orthogonality argument for the boundedness of the discrete bilinear form that proves quasi-optimal convergence for advection-reaction problems. We also confirm numerically that the residual estimate is robust regarding the energy norm and competitive with the a-posteriori error estimate available in the literature.
Moreover, we show that using the CIP formulation and residual minimization onto bubble-enriched test spaces improves the convergence rates in the $L^2$-norm. However, the residual estimator is calculated using the energy norm. The numerical results for goal-oriented adaptivity suggest that applying residual minimization and a coercive stable formulation using bubble-enriched continuous test spaces improves convergence rates for the error in the quantity of interest when the polynomial order for the trial space is low. Additionally, the method maintains the optimal convergence rates for high-order spaces.
Further research will explore the performance of this method applied to other challenging problems. For example, we will study the extension of this method to non-linear hyperbolic equations that describe many engineering problems, such as solving the shallow water equations and coupled fluid and solid mechanics equations for assessing the long-term stability of tailing dams.

\section*{Acknowledments}
\noindent This publication was made possible in part by the support of Vista Energy Company, which promoted the exploration of new numerical techniques for simulating industry-related problems. Additionally, this publication was made possible in part by the Professorial Chair in Computational Geoscience at Curtin University. This project has received funding from the European Union's Horizon 2020 research and innovation programme under the Marie Sklodowska-Curie grant agreement No 777778 (MATHROCKS).

\appendix{}
\section{Proof of Proposition~\ref{prop:bubble}.}\label{Appendix}

We separate the proof of Proposition~\ref{prop:bubble} into two part: Lemma~\ref{lmm:boundednessPp} and Theorem~\ref{thrm:Aposteriori}, with the a-priori error estimate onto bubble enrichment.

\begin{lmm}[Boundedness]\label{lmm:boundednessPp} Consider $k=p+1>d=2$. If $~\eta \in \left(U^{p}_h \right)^\perp :=\left\{z \in L^2(\Omega): (z, \nu_h) = 0, \, \forall \nu_h \in U^{p}_h\right\}$ the following result holds,
\begin{equation}\label{eq:sup_b_pp1}
    \underset{\nu_h \in U^{p,k}_h}{\sup} \dfrac{b_h(\eta, \nu_h)}{\tnorm{\nu_h}_{k, h}} \lesssim \tnorm{\eta}_{k, h,\#},
\end{equation}
where the norm 
$$\tnorm{\eta}_{k, h,\#} := \tnorm{\eta}_{k, h} + \big(k^2 \, h^{\frac{1}{2}}p^{1/2} + k^{\alpha/2}p^{-1/2} \big)|\eta|_h $$ 
%
%
and  $|\cdot|_h,$ is the semi-norm
$$|\eta|_h :=\left(\sum_{T\in \Omega_h} h^{-1}_T\beta_{T,\infty} \|\eta\|^2_T\right)^{1/2} .$$

\end{lmm}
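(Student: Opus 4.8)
The plan is to expand $b_h(\eta,\nu_h)=a(\eta,\nu_h)+j_{h,k}(\eta,\nu_h)$ according to the definitions~\eqref{eq:biform} and~\eqref{eq:jdef}, and to bound the supremum over $\nu_h\in U^{p,k}_h$ term by term. Three of the four contributions are benign and get absorbed into the $\tnorm{\eta}_{k,h}$ part of $\tnorm{\eta}_{k,h,\#}$: the reaction term $(\mu\eta,\nu_h)_\Om$ is controlled by Cauchy--Schwarz together with $\mu\geq\mu_0$, turning $\norm{\nu_h}_\Om$ into $\mu_0^{-1/2}\tnorm{\nu_h}_{k,h}$; the outflow boundary term $(\Vb\cdot\Vn\,\eta,\nu_h)_{\bOm^+}$ is bounded by $\norm{|\Vb\cdot\Vn|^{1/2}\eta}_{\bOm}\norm{|\Vb\cdot\Vn|^{1/2}\nu_h}_{\bOm}$, each factor being a summand of the respective triple norm; and the penalty term $j_{h,k}(\eta,\nu_h)$ is handled by Cauchy--Schwarz in the $\Gg_{h,k}$-weighted edge inner product, giving $j_{h,k}(\eta,\eta)^{1/2}j_{h,k}(\nu_h,\nu_h)^{1/2}\lesssim\tnorm{\eta}_{k,h}\tnorm{\nu_h}_{k,h}$.

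The heart of the argument is the advective term $-(\eta,\Vb\cdot\nabla\nu_h)_\Om$, where the orthogonality hypothesis $\eta\in(U^p_h)^\perp$ and the choice $k=p+1$ enter decisively. For any $w_h\in U^p_h$, orthogonality gives $(\eta,\Vb\cdot\nabla\nu_h)_\Om=(\eta,\Vb\cdot\nabla\nu_h-w_h)_\Om$, so I am free to subtract the best continuous piecewise-polynomial representative of the convective derivative. The structural fact I would exploit is that, for the advection frozen to its element mean $\bar\Vb$, one has $\bar\Vb\cdot\nabla\nu_h|_T\in\mathbb{P}^{k-1}(T)=\mathbb{P}^p(T)\subset V^p_h$ \emph{precisely because} $k=p+1$; hence its Oswald interpolant $I_{Os}(\bar\Vb\cdot\nabla\nu_h)\in U^p_h$ is annihilated by $\eta$. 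Writing $\Vb=\bar\Vb+(\Vb-\bar\Vb)$ splits the convective term into a \emph{jump} contribution and a \emph{consistency} contribution, which I expect to generate the two weights $k^{\alpha/2}p^{-1/2}$ and $k^2h^{1/2}p^{1/2}$ multiplying $|\eta|_h$.

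For the jump contribution I would take $w_h=I_{Os}(\bar\Vb\cdot\nabla\nu_h)$ and invoke Lemma~\ref{hp_interp_error}, bounding $\norm{\bar\Vb\cdot\nabla\nu_h-I_{Os}(\bar\Vb\cdot\nabla\nu_h)}_T$ by $(h_T/p)^{1/2}\sum_{e\in\mathcal{F}_T}\norm{\llbracket\bar\Vb\cdot\nabla\nu_h\rrbracket}_e$. Since every $\nu_h\in U^{p,k}_h$ is globally $C^0$, its gradient jump is purely normal, $\llbracket\nabla\nu_h\rrbracket_e=\llbracket\nabla\nu_h\cdot\Vn_e\rrbracket\Vn_e$, so the scalar jump factors as $(\bar\Vb\cdot\Vn_e)\llbracket\nabla\nu_h\cdot\Vn_e\rrbracket$ up to a term carrying $\llbracket\bar\Vb\rrbracket$. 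Extracting $\Gg_{h,k}^{1/2}$ recognizes the CIP seminorm and converts the prefactor via $\norm{\Vb\cdot\Vn_e}_{L^\infty(e)}\Gg_{h,k}^{-1/2}=\norm{\Vb\cdot\Vn_e}_{L^\infty(e)}^{1/2}k^{\alpha/2}/h_e$; combined with $(h_T/p)^{1/2}$, the factor $\norm{\eta}_T$, and a Cauchy--Schwarz in the element index (using quasi-uniformity $h_e\approx h_T\approx h$), this delivers the $k^{\alpha/2}p^{-1/2}|\eta|_h\tnorm{\nu_h}_{k,h}$ bound. The consistency term $(\eta,(\Vb-\bar\Vb)\cdot\nabla\nu_h)_\Om$, together with the leftover $\llbracket\bar\Vb\rrbracket$ contribution, I would estimate through $\norm{\Vb-\bar\Vb}_{L^\infty(T)}\lesssim h_T|\Vb|_{W^{1,\infty}(T)}$ and the standard $hp$ inverse inequality $\norm{\nabla\nu_h}_T\lesssim(k^2/h_T)\norm{\nu_h}_T$ (of the same $k^2/h$ scaling as Lemma~\ref{lmm:hp-trace_ineq}); after normalizing against the $h_T^{-1}\beta_{T,\infty}$ weight in $|\cdot|_h$, this yields the $k^2h^{1/2}p^{1/2}|\eta|_h$ term, with the residual $p$-powers tracked through the $hp$ trace/inverse estimates.

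The main obstacle will be the careful bookkeeping of the $h$, $p$, and $k$ exponents as they flow through the trace, inverse, and Oswald-interpolation inequalities, and in particular verifying that the stabilization weight $\Gg_{h,k}=h_e^2k^{-\alpha}\norm{\Vb\cdot\Vn_e}_{L^\infty(e)}$ reassembles exactly into the CIP seminorm sitting inside $\tnorm{\nu_h}_{k,h}$. The identity $k=p+1$ is not cosmetic: it places $\bar\Vb\cdot\nabla\nu_h$ in $V^p_h$ so that its Oswald interpolant can be absorbed by the orthogonality, and dropping it would force a higher-degree interpolation and break the argument. I also expect the genuinely variable (non-polynomial) advection field to demand the most care, since both the frozen-field jump $\llbracket\bar\Vb\rrbracket$ and the fluctuation $\Vb-\bar\Vb$ must be shown to be of the same asymptotic order as the desired consistency weight; the global $C^0$-continuity of the enriched test space is what makes the normal-only gradient-jump reduction hold uniformly, so no separate treatment of the bubble component of $\nu_h$ should be required.
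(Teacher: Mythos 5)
Your proposal follows essentially the same route as the paper's proof: the benign reaction, outflow, and penalty terms are absorbed into $\tnorm{\eta}_{k,h}$, and the advective term is split via the piecewise-constant projection $\Vb_h$ of $\Vb$, with the fluctuation $(\eta,(\Vb-\Vb_h)\cdot\nabla\nu_h)_\Om$ handled by the $W^{1,\infty}$ approximation and the $k^2/h_T$ inverse inequality, and the frozen-field part killed up to the Oswald interpolation error of $\Vb_h\cdot\nabla\nu_h\in V^p_h$ (which lands in $U^p_h$ exactly because $k=p+1$), whose edge jumps reassemble into the CIP seminorm. The decomposition, the role of orthogonality, and the origin of the two weights $k^2h^{1/2}p^{1/2}$ and $k^{\alpha/2}p^{-1/2}$ all match the paper.
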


\begin{proof} 
    The only term to estimate in~\eqref{eq:CIP_ discrete_bubbles}  is $\big(\eta,\,\Vb \cdot \nabla \nu_h \big)_\Om$. 
    Let $\Vb_h$ the $L^2$-orthogonal projection of $\Vb$ onto $V_h^0$. Thus,
    \begin{equation}\label{eq:advective_term}
        \big(\eta, \Vb \cdot \nabla \nu_h \big)_\Om = \big(\eta, (\Vb -\Vb_h) \cdot 	
        \nabla \nu_h \big)_\Om + \big(\eta, \Vb_h\nabla\nu_h \big)_\Om.
    \end{equation}
    Since  $\Vb \in \big[W^{1, \infty}(\Om) \big]^2$, and $\nu_h\in U_h^{p,k}\subset U_h^{k}$ we have the following inverse inequalities (see e.g.,~\cite{ canuto1982approximation}): 
    \begin{align}\|\Vb-\Vb_h\|_{[L^\infty(T)]^d}&\lesssim h_T\|\Vb\|_{[W^{1,
        \infty}(T)]^d}, \label{eq:inverseW1}\\
        \|\nabla v_h\|_{T}&\lesssim k^2h^{-1}_T\|v_h\|_{T}, \text{ for all} T \in \Omega_h. \label{eq:inverseL2}
    \end{align}
    Thus, we can bound the first term in the right-hand side of~\eqref{eq:advective_term} as follows:
    \begin{align*}
        \big(\eta, (\Vb - \Vb_h) \cdot \nabla \nu_h \big)_\Om &\lesssim \left(  
        \sum_{T 	\in \Om_h} h_T^{-1}  \Gb_{T, \infty} \norm{\eta}^2_T \right)^{\frac{1}
        {2}} \left( \sum_{T \in \Om_h} h_T^3 \Gb_{T, \infty}^{-1} \norm{\Vb}_{W^{1,
        \infty}(T)}^2 \norm{\nabla \nu_h}_T^2 \right)^{\frac{1}{2}} \quad \text{(by~\eqref{eq:inverseW1} and Cauchy-Schwartz)} \nonumber\\
        & \lesssim k^2 h^{\frac{1}{2}} |\eta|_h \norm{\nu_h}_\Om \hspace{6.13cm} \
        \text{(by~\eqref{eq:semi-norm}, and~\eqref{eq:inverseL2})}.
    \end{align*}
    The second term in the right-hand side of~\eqref{eq:advective_term} is estimated using the fact that $\eta\in \left(U^{p}_h\right)^\perp$, the Cauchy-Schwartz inequality and the interpolation estimate of Lemma~\ref{hp_interp_error}.
    Let $ \phi_h:=\Vb_h \cdot \nabla \nu_h$. We call $U_{\beta,h}:=\left\{I_{Os}(\phi_h):\phi_h =\Vb_h \cdot \nabla \nu_h, \quad \nu_h\in U^{k,p}_h \right\}.$
    Since, 
    $I_{Os}\phi_h$  may not be in $U^{p,k}_h$ but it is contained in $U^{p}_h$, since $k=p+1$. Hence, 
    \begin{align*}
        \big( \eta, \phi_h)_\Om & = \big( \eta,\phi_h - I_{Os}\phi_h\big)_\Om \\
        & \leq \left(  \sum_{T \in \Om_h} h_T^{-1}  \Gb_{T, \infty} \norm{\eta}^2_T \right)^{\frac{1}{2}} \left( \sum_{T \in \Om_h} h_T \Gb_{T, \infty}^{-1} \norm{\phi-I_{Os}\phi_h}^2_T\right)^{\frac{1}{2}}  \\
        & \lesssim |\eta|_h \left( \sum_{T\in \Om_h} \sum_{e \in \mathcal{F}_T} h_T^2 p^{-1} \Gb_{T, \infty}^{-1} \norm {\llbracket \phi_h \rrbracket}^2_e \right)^{\frac{1}{2}}. 
    \end{align*}
    Since 
    \begin{align}
        \norm {\llbracket \phi_h \rrbracket}_e =\norm {\llbracket \Vb_h\cdot \nabla \nu_h \rrbracket}_e \lesssim  \norm {\llbracket(\Vb- \Vb_h)\cdot \nabla \nu_h \rrbracket}_e + \norm{\llbracket \Vb\cdot \nabla \nu_h \rrbracket}_e,
    \end{align}
    and
    \begin{align}
        \norm {\llbracket(\Vb- \Vb_h)\cdot \nabla \nu_h \rrbracket}_e 
        & \lesssim \sum_{e\in \overline{T} }h_T\|\Vb\|_{[W^{1,\infty}(T)]^d} \|\nabla \nu_h|_T\|_e \\
        & \lesssim  \sum_{e\in \overline{T} }h_T\|\Vb\|_{[W^{1,\infty}(T)]^d} \left(\frac{p^2}{h_T}\right)^{1/2}\|\nabla \nu_h\|_T & \text{( Estimate~\eqref{eq:trace-inequality})}\\
        & \lesssim  \sum_{e\in \overline{T} }h_T\|\Vb\|_{[W^{1,\infty}(T)]^d} \left(\frac{p^2}{h_T}\right)^{1/2}\frac{k^2}{h_T}\| \nu_h\|_T & \text{(Estimate~\eqref{eq:inverseL2})}\\
        & \lesssim  \sum_{e\in \overline{T} }\|\Vb\|_{[W^{1,\infty}(T)]^d}\frac{pk^2}{h_T^{1/2}}\| \nu_h\|_T. 
    \end{align}
    Hence 
    \begin{align}
        \sum_{T\in \Om_h} \sum_{e \in \mathcal{F}_T} h_T^2 p^{-1} \Gb_{T, \infty}^{-1} \norm {\llbracket (\Vb- \Vb_h)\cdot \nabla v_h \rrbracket}^2_e \lesssim {p}k^4h \|v_h\|_\Omega^2\nonumber
    \end{align}
    Furthermore, 
    \begin{align}
        \sum_{T\in \Om_h} \sum_{e \in \mathcal{F}_T} h_T^2 p^{-1} \Gb_{T, \infty}^{-1} \norm {\llbracket \Vb\cdot \nabla \nu_h \rrbracket}^2_e & \lesssim \sum_{T\in \Om_h} \sum_{e \in \mathcal{F}_T} h_e^2 p^{-1} \Gb_{e, \infty}\norm {\llbracket \nabla v_h \cdot n \rrbracket}_e^2 \\
        & \lesssim p^{-1}k^{\alpha}j(v_h, v_h).
        %
    \end{align}
    Collecting the above estimates and Lemma~\ref{lmm:Stability_bubbles}  yields
    \[
    \big(\eta, \phi_h \big)_\Om \lesssim |\eta|_h \big(k^2 \, h^{\frac{1}{2}}p^{1/2} + k^{\alpha/2}p^{-1/2}  \big) \tnorm{\nu_h}_{k, h}.
    \]

    %
    \noindent This completes the proof.
    \end{proof}

%


\begin{lmm}\label{Lemma12}
    Consider the continuous $L^2(\Omega)$-projection operator $\Pi_h:L^2(\Omega)\to U_h^{p}$. Then, for all $w\in H^s(\Omega)$, $s\geq 1$, we have the following $hp$-approximation properties
    \begin{align}
        | w - \Pi_h w |_{h} & \lesssim  p^{-\frac{1}{4}}\left(\frac{h}{p}\right)^{q-1/2}\|w\|_{q,\Omega}, \label{eq:estimateH}\\
        \tnorm{w-\Pi_h w}_{h,k} &\lesssim \left(p^{3/4}+p^{11/4}k^{-\alpha/2}\right)\left(\frac{h}{p}\right) ^{q-1/2}\|w\|_{q,\Omega} \label{eq:estimateV}
    \end{align}
    with $q = \min\{p+1,s\}$.
\end{lmm}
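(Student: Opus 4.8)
The plan is to reduce both $hp$-approximation bounds to the $L^2(\Om)$-projection error estimates \eqref{eq:estimateL2} and \eqref{eq:estimateL2grad} of Lemma~\ref{hp_error_estimate_L2}, combined with trace inequalities and the quasi-uniformity of $\Om_h$ (so that $h_T\approx h$ and $h_e\approx h$ for every element and facet). No boundedness of a bilinear form is needed here: the whole content is a comparison of norms applied to the single function $w-\Pi_h w$. The first bound \eqref{eq:estimateH} is immediate, whereas \eqref{eq:estimateV} requires splitting the triple norm \eqref{eq:coercive_norm} into its three constituent pieces and estimating each.

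For \eqref{eq:estimateH} I would start from the definition \eqref{eq:semi-norm} of $|\cdot|_h$, bound the weight $\beta_{T,\infty}\le\|\Vb\|_{[L^\infty(\Om)]^d}$, and use quasi-uniformity to replace $h_T^{-1}$ by $h^{-1}$, giving $|w-\Pi_h w|_h^2\lesssim h^{-1}\|w-\Pi_h w\|_\Om^2$. Inserting \eqref{eq:estimateL2} yields $|w-\Pi_h w|_h^2\lesssim h^{-1}p^{1/2}(h/p)^{2q}\|w\|_{q,\Om}^2$, and the elementary identity $h^{-1}(h/p)^{2q}=p^{-1}(h/p)^{2q-1}$ converts this into $p^{-1/2}(h/p)^{2q-1}\|w\|_{q,\Om}^2$; taking square roots gives exactly the stated power $p^{-1/4}(h/p)^{q-1/2}$.

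For \eqref{eq:estimateV} I would expand $\tnorm{w-\Pi_h w}_{h,k}^2$ according to \eqref{eq:coercive_norm} and treat the three terms separately. The weighted volume term $\|\mu_0^{1/2}(w-\Pi_h w)\|_\Om^2$ is controlled directly by \eqref{eq:estimateL2}; it gains an extra power of $h$ relative to the other pieces and is therefore not dominant. For the boundary term $\tfrac12\||\Vb\cdot\Vn|^{1/2}(w-\Pi_h w)\|_{\bOm}^2$, note that $w-\Pi_h w$ is \emph{not} a polynomial, so the polynomial trace inequality \eqref{eq:trace-inequality} is inapplicable; instead I would use a continuous multiplicative trace inequality $\|v\|_{\partial T}^2\lesssim h_T^{-1}\|v\|_T^2+\|v\|_T\|\nabla v\|_T$ on each boundary simplex, sum with Cauchy--Schwarz, and insert \eqref{eq:estimateL2} and \eqref{eq:estimateL2grad}; the cross term $\|w-\Pi_h w\|_\Om\|\nabla(w-\Pi_h w)\|_\Om$ dominates and produces precisely the $p^{3/4}(h/p)^{q-1/2}$ contribution. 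For the penalty term $j_{h,k}(w-\Pi_h w,w-\Pi_h w)$ from \eqref{eq:jdef}, the hypothesis $w\in H^s$, $s>\tfrac32$, ensures the broken normal gradient has well-defined facet traces, so the term is finite; I would bound each facet jump $\|\llbracket\nabla(w-\Pi_h w)\cdot\Vn_e\rrbracket\|_e$ by the one-sided gradient traces $\|\nabla(w-\Pi_h w)\|_{\partial T}$ of the two adjacent elements, after which the weight $\Gg_{h,k}\simeq h_e^2 k^{-\alpha}$ produces the $k^{-\alpha/2}$ factor and the highest power $p^{11/4}$.

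\textbf{The main obstacle} I anticipate is the penalty term. Unlike the boundary term, it controls the \emph{gradient} jump of the projection error, and since $\nabla(w-\Pi_h w)$ is again non-polynomial, a continuous/multiplicative trace inequality applied to $\nabla(w-\Pi_h w)$ brings in second-order derivatives, and hence a higher-order $hp$-approximation estimate of $\Pi_h$ beyond those quoted in Lemma~\ref{hp_error_estimate_L2}. The delicate step is the exact bookkeeping of the $h$- and $p$-powers: every partial bound must be reshuffled into the common factor $(h/p)^{2q-1}$ (using identities such as $(h/p)^{2q-2}=(p/h)(h/p)^{2q-1}$ and $h^{-2}(h/p)^{2q}=p^{-2}(h/p)^{2q-2}$), so that after multiplication by $\Gg_{h,k}$ all negative powers of $h$ cancel and one lands on $p^{11/2}k^{-\alpha}(h/p)^{2q-1}$, i.e.\ $\big(p^{11/4}k^{-\alpha/2}\big)^2(h/p)^{2q-1}$, matching the claimed bound after taking square roots and adding the $p^{3/4}$ contribution from the boundary term.
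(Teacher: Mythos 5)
Your handling of \eqref{eq:estimateH}, of the $\mu_0$-weighted volume term, and of the boundary term of \eqref{eq:coercive_norm} is correct and essentially the paper's route: a multiplicative trace inequality combined with \eqref{eq:estimateL2} and \eqref{eq:estimateL2grad} does yield $\|w-\Pi_h w\|_{\partial\Om}\lesssim p^{3/4}(h/p)^{q-1/2}\|w\|_{q,\Om}$, with the cross term dominating exactly as you say. The genuine gap is precisely where you flag it and then move on: the penalty term. You correctly observe that applying a continuous trace inequality to $\nabla(w-\Pi_h w)$ would require second derivatives and hence an $hp$-approximation estimate not contained in Lemma~\ref{hp_error_estimate_L2}, but you then carry out the exponent bookkeeping as though that estimate were available. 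It is not: for $w\in H^s(\Om)$ with $\tfrac32<s<2$ the second derivatives are not even in $L^2(\Om)$, and for smoother $w$ no bound on $\|D^2(w-\Pi_h w)\|_T$ is quoted anywhere in the paper. The step ``bound each facet jump by the one-sided gradient traces $\|\nabla(w-\Pi_h w)\|_{\partial T}$'' is therefore the step you still owe, and it is the one that produces the dominant $p^{11/4}k^{-\alpha/2}$ factor in \eqref{eq:estimateV}.

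The paper closes this gap with a specific device that avoids second derivatives altogether. Split $\nabla(w-\Pi_h w)=\nabla(w-\Pi_h^* w)+\nabla(\Pi_h^* w-\Pi_h w)$, where $\Pi_h^*:L^2(\Om)\to V_h^p$ is the \emph{broken} $L^2$-projection. The first piece has a known element-boundary trace estimate requiring only $H^s$ regularity (the analogue of \eqref{eq:estimateE}, from \cite{burmanErn2007hpCIP}). The second piece is a polynomial on each $T$, so the discrete trace inequality \eqref{eq:trace-inequality} and the inverse inequality \eqref{eq:inverseL2} apply and reduce it to $\|\Pi_h w-\Pi_h^* w\|_T$; this is in turn controlled via the Oswald interpolant and Lemma~\ref{hp_interp_error} by the facet jumps $\|\llbracket \Pi_h^* w\rrbracket\|_e=\|\llbracket \Pi_h^* w-w\rrbracket\|_e$, which are again covered by \eqref{eq:estimateE}. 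The result is $\|\nabla(w-\Pi_h w)\|_{\partial T}\lesssim p^{7/4}(h/p)^{q-3/2}\|w\|_{q,T}$, and inserting this into $j_{h,k}$ with the weight $h_e^2 k^{-\alpha}$ gives $p^{11/2}k^{-\alpha}(h/p)^{2q-1}\|w\|_{q,\Om}^2$, i.e.\ the $p^{11/4}k^{-\alpha/2}$ term you anticipated. Without some such mechanism your argument does not establish \eqref{eq:estimateV}.
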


\begin{proof} 

Let $w \in H^s(\Omega), s \geq 1.$ The proof of~\eqref{eq:estimateH} follows from~\cite[Lemma 5.6]{burmanErn2007hpCIP}.

    The proof of estimate~\eqref{eq:estimateV} is as follows. By definition,
    \begin{align}
        \tnorm{w-\Pi_h w}_{h,k}^2 := \big\| \mu_0^{1/2} (w-\Pi_h w) \big\|^2_\Om + \dfrac{1}{2}\big\| |\Vb \cdot \Vn|^{1/2} (w-\Pi_h w) \big\|^2_{\bOm}  + j_{h,k}( w-\Pi_h w, w-\Pi_h w). \label{eq:boundHJ}
    \end{align}
    The estimates~\eqref{eq:estimateL2} and~\eqref{eq:estimateL2grad}  of Lemma~\ref{hp_error_estimate_L2} imply
    \begin{align*}
        \|w-\Pi_hw\|_{\partial\Omega}\leq p^{3/4}\left(\frac{h}{p}\right)^{q-1/2}\|w\|_{q,\Omega}.
    \end{align*}
    Thus, the second term of ~\eqref{eq:boundHJ} is bounded.
    To bound the last term of~\eqref{eq:boundHJ}, we will use the well-known estimate for the orthogonal $L^2$ projection $\Pi^*_h:L^2(\Omega)\to V^p_h$ under the same conditions of~\cite[Lemma 5.4]{ burmanErn2007hpCIP},
    \begin{align}
        \|w - \Pi^*_hw\|_{\partial T}&\lesssim p^{1/4}\left(\frac{h}{p}\right)^{q-1/2}\|w\|_{q,T}, \qquad \forall w \in \mathbb{P}^{p}(T)\label{eq:estimateE}
    \end{align}
    and 
    \begin{align*}
        \|\nabla(w-\Pi_h w)\|_{\partial T } & \leq \|\nabla(w-\Pi^*_h w)\|_{\partial T } +\|\nabla(\Pi_h w-\Pi^*_h w)\|_{\partial T } \\
        & \lesssim \|\nabla (w-\Pi^*_hw)\|_{\partial T} +\left(\frac{p^2}{h_T}\right)^{3/2}\|\Pi_h w - \Pi^*_h w\|_T & \text{(estimates~\eqref{eq:trace-inequality} and~\eqref{eq:inverseL2})}\\
        &\lesssim \|\nabla (w-\Pi^*_hw)\|_{\partial T} +\left(\frac{p^2}{h_T}\right)^{3/2}\|\Pi_h^* w - I_{Os}(\Pi^*_h w)\|_T\\ 
        &\lesssim \|\nabla (w-\Pi^*_hw)\|_{\partial T} +\left(\frac{p^2}{h_T}\right)^{3/2}\left(\frac{h_T}{p}\right)^{1/2} \sum_{e\in \mathcal{F}_T} \|[\Pi_h^* w- w]_e\|_e\\
        &  \lesssim \|\nabla (w-\Pi^*_hw)\|_{\partial T} +\left(\frac{p^2}{h_T}\right)^{3/2}\left(\frac{h_T}{p}\right)^{1/2}p^{1/4}\left(\frac{h_T}{p}\right)^{q-1/2}\|w\|_{q,T} & \text{(estimate~\eqref{eq:estimateE}}\\
        & \lesssim  \|\nabla (w-\Pi^*_hw)\|_{\partial T} +{p}^{7/4}\left(\frac{h_T}{p}\right)^{q-3/2}\|w\|_{q,T} & \\
        & \lesssim p^{7/4}\left(\frac{h}{p}\right)^{q-3/2}\|w\|_{q,T}.
    \end{align*}
    Thus, the last term of~\eqref{eq:boundHJ} is controlled as follows
    \begin{align*}
        j_{h,k}(w-\Pi_h w, w-\Pi_h w) & \lesssim \sum_{e\in \mathcal{F}}\frac{h_e^2}{k^\alpha}\beta_{\infty,e}\|[\nabla(w-\Pi w)\cdot n]_e\|_e^2 \\
        & \lesssim \sum_{T\in \Omega_h}\sum_{e\in \mathcal{F}(T)}\frac{h^2_T}{k^\alpha}\beta_{\infty,e}\left(p^{7/2}\left(\frac{h_T}{p}\right)^{2q-3}\|w\|_{q,\Omega}^2\right)\\
        & \lesssim p^{11/4}k^{-\alpha}\left(\frac{h}{p}\right)^{2q-1}\|w\|_{q,\Omega}^2.
    \end{align*}
\end{proof}



 


\begin{thrm}[A priori error estimate onto bubble enrichment]\label{thrm:Aposteriori}
	Let $u\in H^s(\Om)$ with $s > \frac{3}{2}$, solve 
	\eqref{eq:model_problem} and let $u_h\in U_h^{p}$ solve~\eqref{eq:residual}. Take $\alpha=\frac{7}{2}$, then  under Assumption~\ref{ass:reference} and using Lemma~\ref{Lemma12},   the following estimate holds
	\begin{equation}
		\tnorm{u - u_h}_{k, h} \lesssim (p + p^{\frac{5}{4}} h^{\frac{1}{2}}) \left(\dfrac{h}{p} \right)^{q-\frac{1}{2}} \norm{u}_{q, \Om},\label{eq:estimate1}
	\end{equation}
	with $q=\min\{p+1, s\}$. Moreover, if $~h \leq p^{-\frac{1}{2}}$ then,
	\begin{equation}
		\tnorm{u - u_h}_{k, h} \lesssim p \left(\dfrac{h}{p} \right)^{q-\frac{1}{2}} \norm{u}_{q, \Om}.
        \end{equation}
\end{thrm}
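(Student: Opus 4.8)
The plan is to reduce the problem to a projection error via the abstract result already proved, and then insert the concrete $hp$-interpolation bounds. First I would invoke Theorem~\ref{thrm:a-priori-err-estim-min-res}: its hypotheses are the discrete coercivity of Lemma~\ref{lmm:Stability_bubbles} together with Assumption~\ref{Assump1}, and the latter is precisely what Proposition~\ref{prop:bubble} establishes for the continuous $L^2(\Omega)$-projection $\Pi_h:U\to U_h^p$. This collapses the entire estimate to controlling a single interpolation quantity, namely
\begin{equation*}
\tnorm{u-u_h}_{k,h}\lesssim\tnorm{u-\Pi_h u}_{h,k,\#}.
\end{equation*}

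Next I would unfold the $\#$-norm by its definition, writing
\begin{equation*}
\tnorm{u-\Pi_h u}_{h,k,\#}=\tnorm{u-\Pi_h u}_{h,k}+\bigl(k^{2}h^{1/2}p^{1/2}+k^{\alpha/2}p^{-1/2}\bigr)\,|u-\Pi_h u|_h,
\end{equation*}
and bound the two summands independently with Lemma~\ref{Lemma12}: the triple-norm term through estimate~\eqref{eq:estimateV} and the seminorm term through estimate~\eqref{eq:estimateH}. The crux is then the exponent bookkeeping. Inserting $k=p+1$ (so that $k$ and $p$ are comparable up to a constant independent of the mesh and of the order) and the prescribed value $\alpha=\tfrac{7}{2}$, the factor $k^{-\alpha/2}$ appearing in~\eqref{eq:estimateV} reduces $p^{11/4}k^{-\alpha/2}$ to order $p$, so that the triple-norm contribution is of order $p\,(h/p)^{q-1/2}\norm{u}_{q,\Om}$ after absorbing $p^{3/4}\lesssim p$. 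With these same choices the prefactor multiplying the seminorm becomes a sum of powers of $p$ and $h^{1/2}$, which combined with~\eqref{eq:estimateH} produces the second group of terms in~\eqref{eq:estimate1}.

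Collecting all contributions and retaining the dominant powers yields~\eqref{eq:estimate1}. For the final refinement, under the extra hypothesis $h\le p^{-1/2}$ one has $h^{1/2}\le p^{-1/4}$, so the term $p^{5/4}h^{1/2}$ is dominated by $p^{5/4}p^{-1/4}=p$; hence $p+p^{5/4}h^{1/2}\lesssim p$ and the sharpened bound follows at once. I expect the main obstacle to be the careful $hp$-bookkeeping: one must track every power of $p$ and $h$ through the two Lemma~\ref{Lemma12} estimates and through the $\#$-norm prefactor, verifying that the implied constants stay independent of both $h$ and $p$, and in particular that the specific choice $\alpha=\tfrac{7}{2}$ is exactly what balances the penalty-consistency factor $p^{11/4}k^{-\alpha/2}$ against the remaining approximation powers. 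The analytic content is light; the difficulty lies entirely in the uniform exponent accounting.
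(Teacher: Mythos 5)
Your proposal is correct and follows essentially the same route as the paper: the paper likewise reduces everything to $\tnorm{u-\Pi_h u}_{h,k,\#}$ (it re-derives the bound of Theorem~\ref{thrm:a-priori-err-estim-min-res} inline via the triangle inequality rather than citing it, but the content is identical), unfolds the $\#$-norm, inserts estimates~\eqref{eq:estimateH} and~\eqref{eq:estimateV} with $k=p+1$ and $\alpha=7/2$, and concludes the sharpened bound from $h^{1/2}\le p^{-1/4}$. The only difference is presentational — you invoke Proposition~\ref{prop:bubble} explicitly to verify Assumption~\ref{Assump1}, which the paper leaves implicit.
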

\begin{proof}
When $\alpha=7/2,$ following estimate ~\eqref{eq:estimateV} in Lemma~\ref{Lemma12}, we have 
\begin{align*}
		\tnorm{u - \Pi_h u }_{k, h} & \lesssim \left(p+p^{5/4}h^{1/2}\right)\left(\frac{h}{p}\right) ^{q-1/2}\|u\|_{q,\Omega}.
\end{align*}
Moreover, following the proof of Theorem~\ref{thrm:a-priori-err-estim-min-res} we have
\begin{align*}
    \tnorm{u_h-\Pi_h u}_{h,k} &\lesssim \tnorm{u-\Pi_h u}_{h,k,\#}
\end{align*}

Thus, 
   \begin{align*}
		\tnorm{u - u_h}_{k, h} & \lesssim \tnorm{u-\Pi_h u}_{h,k} +  \tnorm{\Pi_h u-u_h}_{k,h}\\
  & \lesssim \tnorm{u-\Pi_h u}_{h,k} + \big(k^2 \, h^{\frac{1}{2}}p^{1/2}
  k^{\alpha/2}p^{-1/2}  \big)\big|u-\Pi_h u\big|_h\\
    & \lesssim \tnorm{u-\Pi_h u}_{h,k} +  
\big(k^2 \, h^{\frac{1}{2}} p^{1/2} + k^{7/4}p^{-1/2}  \big)p^{-\frac{1}{4}}\left(\frac{h}{p}\right)^{q-1/2} \|u\|_{q,\Omega} & \text{(using estimate ~\eqref{eq:estimateH})}\\
 & \lesssim  \left(p^{3/4}+p^{11/4}k^{-7/4}+  k^{2}p^{1/4} \, h^{\frac{1}{2}} + k^{7/4}p^{-3/4}  \right)\left(\frac{h}{p}\right)^{q-1/2}
\|u\|_{q,\Omega},& \text{(using estimate~\eqref{eq:estimateV})}\\
& \lesssim\left( p + p^{5/4}h^{\frac{1}{2}}\right)\left(\frac{h}{p}\right)^{q-1/2}
\|u\|_{q,\Omega} & \text{ (overestimating) } 
  \end{align*}
  with $q=\min(p+1,s)$ and the inequality~\eqref{eq:estimate1} is satisfied.
 Moreover,  if $h\leq p^{-{{1}/2}}$,  with $p\geq 1$ can be bounded by 1, we obtain  
 \begin{align*}
     \tnorm{u - u_h}_{k, h}\lesssim p\left(\frac{h}{p}\right)^{q-1/2}
\|u\|_{q,\Omega},
  \end{align*}
  with
$q=\min(p+1,s)$.
\end{proof}
%
\pagebreak
\bibliography{mybibfile}

\end{document}